\documentclass[final,3p,times,12pt]{elsarticle}

\usepackage{epsfig}
\usepackage{amssymb}
\usepackage{amsthm}
\usepackage{lineno}
\usepackage{amsmath}
\usepackage{graphicx}
\usepackage{subfigure}
\usepackage{color}
\usepackage[all]{xy}
\usepackage{booktabs}
\usepackage{tabularx}
\usepackage{float}
\usepackage{fancyhdr}
\usepackage{mathrsfs}
\usepackage{extarrows}

\theoremstyle{plain}
\newtheorem{theorem}{Theorem}[section]

\theoremstyle{definition}
\newtheorem{definition}[theorem]{Definition}
\newtheorem{example}[theorem]{Example}

\theoremstyle{remark}
\newtheorem{remark}{Remark}

\numberwithin{equation}{section}

\allowdisplaybreaks[4]

\begin{document}

\begin{frontmatter}
\title{
Betweenness relations and fuzzy betweenness relations in KM-fuzzy metric spaces
\tnoteref{label1}}
\tnotetext[label1]{This research is supported by 
National Natural Science Foundation of China (No. 12461091),
North China University of Technology organized scientific research
project (No. 2023YZZKY19,~No.2024NCUTYXCX104)}
\author[1]{Yu Zhong\corref{cor1}}
\cortext[cor1]{Corresponding author:~zhongyu@ncut.edu.cn}
\address[1]{College of Science, North China University of Technology, Beijing, China}

\begin{abstract}
In this paper, we mainly discuss 
the constructions and the characteristics of
betweenness relations and fuzzy betweenness relations in KM-fuzzy metric spaces. 
And the family of betweenness relations induced by a KM-fuzzy metric form a nest of betweenness relations.
The main focus of this paper is to introduce two different construction methods for fuzzy betweenness relations induced by a KM-fuzzy metric.
One of them is directly obtained by using the implication operator. The other is through the corresponding nest of metrics of KM-fuzzy metrics.
Furthermore, we also show that the two types of fuzzy betweenness relations are the same, and they also satisfy the eight kinds of four-point transitivity properties and the six kinds of five-point transitivity properties.  
\end{abstract}

\begin{keyword}
Ternary relation;
Four-point transitivity;
Five-point transitivity;
Betweenness relation;
Fuzzy betweenness relation;
KM-fuzzy metric space.

AMS Subject Clasifications: 54A40; 03E72
\end{keyword}
\end{frontmatter}


\section{\bf Introduction}

The betweenness relation first emerged in geometry, which describes the situation when an element is in between two other elements. Its origin can be traced back to Pash's paper \cite{Pasch-1882} on distance geometry in 1882.
Subsequently, Huntington and Kline \cite{Huntington-1917,Huntington-1924} systematically discussed the independent postulates for betweenness relations and presented eight
kinds of four-point transitivity properties of a ternary relation. At the same time, a systematic and comprehensive analysis was provided to reveal the connections and counterexamples among these transitive relationships.
Based on this, Pitcher and Smiley \cite{Pitcher-1942} 
continued the analysis of Huntington and Kline, and then conducted a study on the transitivities of betweenness involving five points. 
Recently, L. Zedam, N. Bakri and B. De Bates \cite{Zedam-2018}
gave six kinds of five-point transitivity properties from the perspective of compositions of ternary relations and discuss the traces, the closures and openings of ternary relations.
All of these held a central position in the research of the betweenness theory.

The current research on the concept of betweenness 
relations is conducted within the framework of an axiomatic structure.
A betweenness relation is a special kind of ternary relation that satisfies reflexivity, symmetry, antisymmetry, and transitivity, which is widely associated with various mathematical structures, such as posets \cite{Rautenbach-2011,Sholander-1952, Zhang2019},
lattices \cite{Duvelmeyer-2004, Sholander-1952},
metrics \cite{Simovici-2009, Smiley-1943},
median algebras \cite{Chajda-2013, HedlÍková-1983}, 
normed spaces \cite{Diminnie-1981,Simovici-2009,Toranzos-1971},
convex structures \cite{Chvátal-2009, Vel-1993},
road systems \cite{Bankston-2013}, 
and data aggregations \cite{Pérez-2019}. 
So far, there are many definitions for the betweenness relation. The reason for this is that the definition of the transitivity of the ternary relation varies greatly. 
For example, there are definitions based on four points \cite{Huntington-1917, Huntington-1924} and those based on five points \cite{Pitcher-1942, Zedam-2018}.
In 2019, Pérez-Fernández and De Baets \cite{Pérez-2019} gave a definition of betweenness relation which satisfies symmetry in the end points, closure and end-point transitivity. 
After that, Zhang et al. \cite{Zhang2019} represented a betweenness relation in sense of  \cite{Pérez-2019} as a family of order relations and analyze the corresponding family of induced Alexandrov topologies. 
These definitions and conclusions of the betweenness relations are all based on a specific type of four-point transitivity.

The metric space is a type of important and closest-to-Euclidean-space abstract space
and it has extensive applications both in theoretical research and in real life. 
In a metric space, all triples that satisfy the metric triangle identity condition can generate a betweenness relation.
In 1942,  Menger \cite{Menger-1942} introduced the concept of a statistical metric space.
Then Schweizer and Sklar \cite{Schweizer-1960, Schweizer-1983}
developed Menger's statistical metric and introduced a similar concept of a probabilistic metric space,
which refers to the distance between two points  corresponding to a probability distribution function defined on the set of non-negative real numbers. 
Besides, Menger also presented the concept of betweenness in \cite{Menger-1942} and pointed out its possible application fields. 
Shortly afterwards, 
Wald \cite{Wald-1943} extended Menger's triangular inequality and established a theory of betweenness, which showed that 
the properties of Wald's betweenness theory is consistent with the metric betweenness axioms.
Since then, this topic has received very little attention.

Influenced by the development of the fuzzy set theory, various definitions of fuzzy metrics were introduced by 
Kramosil and Michálek,
Grabiec, 
Morsi, 
George and Veeramani,
Mardones-P\'{e}rez and Prada Vicente.
In 1975, Kramosil and Michalek \cite{KramosilMichalek-1975} proved that in a certain sense, the KM-fuzzy metric is equivalent to the probability metric. 
To make the definition more complete, 
Grabiec \cite{Grabiec-1988} added an another condition to the original definition of the KM-fuzzy metric and explained the KM-fuzzy metric is not only a generalization but also a semantic generalization of the classical metric. Subsequently, based on the monotonically decreasing fuzzy numbers, the Morsi's fuzzy metric \cite{Morsi-1988} is introduced, which is equivalent to the complement of the KM-fuzzy metric. In 1994, George and Veeramani \cite{GeorgeVeeramani-1997} modified some conditions, with the aim of making the induced topology Hausdorff. 
However, the topology induced by the GV-fuzzy metric is the classical topology and cannot induce a fuzzy topology. 
In 2015, 
Mardones-P\'{e}rez and Prada Vicente \cite{Mardones-2015}
proposed another variant of the KM-fuzzy metric through triangular norm operators. 
This new definition not only extended the original definition of KM-fuzzy metric but also enabled it to induce fuzzy structures. 

Recently, some scholars have once again paid attention to the research on the fuzzy betweenness relations.
In 2020, Zhang, Pérez-Fernández, and De Baets 
\cite{Zhang2020}
introduced two different definitions of fuzzy betweenness relations, which are $*$-betweenness relations and 
$*$-E betweenness relations.
Soon afterwards, they investigated the issue of how to construct a fuzzy betweenness relation from a crsip metric with continuous Archimedean triangular norm in \cite{Zhang2020-2}.
The description of fuzzy transitivity in these two definitions is based on the classic betweenness relation definition given by Pérez-Fernández, and De Baets \cite{Pérez-2019} and 
it is also a fuzzy generalization of a certain specific type of the four-point transitive properties.
In 2021, Zedam and De Baets  \cite{Zedam2021} proposed the six types of transitivity properties of fuzzy ternary relations based on five points from the perspective of compositions.
In 2022,
Shi \cite{Shiyi-2022} explored the concept of gated sets within the framework of crisp betweenness relations induced by GV-fuzzy metric spaces.
Also, Shi et al. \cite{Shiyi-2023} discussed the relationships between fuzzy betweenness relations and fuzzy interval operators, fuzzy partial orders and fuzzy Peano-Pasch spaces.
In 2025, Jin and Yan provided a method to construct a $*$-betweenness relation and 
$*$-E betweenness relation by the GV-fuzzy metric and strong GV-fuzzy metric, respectively. 
However, through the above introduction, we can see that there are still many gaps in the current research on fuzzy betweenness relations  by fuzzy metrics.  
By introducing the development status of the fuzziness metric, we can see that the KM-fuzzy metric is more in line with the semantic extension of fuzziness compared to the GV-fuzzy metric. 
This is also the starting point of our article. 
we will explore the 
the constructions and the characteristics of both
betweenness relations and fuzzy betweenness relations in KM-fuzzy metric spaces. 

The main purpose of this paper is to introduce two different construction methods for fuzzy betweenness relations induced by a KM-fuzzy metric. 
One approach is achieved by directly using the implication operator. The other method is realized through the corresponding nest of
metrics. 
Furthermore, we demonstrate that 
these two types of fuzzy betweenness relations satisfy the eight kinds of four-point transitivity properties and the six kinds of five-point transitivity properties.

The organization of this paper is as follows. 
In Section 2, we review some meaningful definitions of ternary relations, betweenness relations and KM-fuzzy metrics. 
Among them, the most important thing is that we sort out and summarized the four-point and five-point transitivity properties of a ternary relation. 
In Section 3, we discuss the construction and the characteristics of a family of betweenness relations in KM-fuzzy metric spaces and obtain a nest of induced betweenness relations.
In Section 4, we give two method to construct fuzzy betweenness relations in KM-fuzzy metric spaces.
One of them is directly obtained by using the implication operator. The other is through the corresponding nest of metrics of KM-fuzzy metrics.
Furthermore, we also show that the two types of fuzzy betweenness relations are the same, and they also satisfy the eight kinds of four-point transitivity properties and the six kinds of five-point transitivity properties. 
Finally, the article presents many summary diagrams and an outlook for future research.

\section{\bf Preliminaries}

In this section, we recall some concepts of ternary relations, betweenness relations and KM-fuzzy metrics.
Let $X$ be a non-empty set. 
A ternary relation $T$ is a subset of $X^{3}$. 
Ternary relations have extensive applications in mathematics and logic, computer science and artificial intelligence.

\subsection{Ternary relations}

In this part, we first recall the definitions of reflexivity, symmetry, 
antisymmetry of a ternary relation.
Then we recall the eight kinds of four-point transitivity properties
and the six kinds of five-point transitivity properties 
from the perspective of positional relations and from the perspective of the composition of ternary relations, respectively.

Let $T\subseteq X^{3}$ be a ternary relation. Then $T$ is called
\vskip1mm
\noindent 
(1) reflexive, if $(x, y, y)\in T$ for all $x, y\in X$.

\noindent 
(2) symmetric, if $(x, y, z)\in T$ if and only if $(z, y, x)\in T$  for all $x, y, z\in X$.

\noindent 
(3) antisymmetric, if $(x, y, z)\in T$ and $(x, z, y)\in T$ 
implies $y=z$  for all $x, y, z\in X$.

\subsubsection{Four-point transitivity properties of a ternary relation}

In \cite{Huntington-1917, Huntington-1924}, 
Huntington and Kline gave the eight kinds of four-point transitivity properties of a ternary relation. 
Let $x ,y, s, t \in X$. 
Then the four-point transitivity properties are shown as follows.

\noindent 
(P1) $ (x, s, t)\in T, (s, t, y)\in T $  imply that $(x, s, y)\in T$.

\noindent 
(P2) $ (x, s, t)\in T, (s, y, t)\in T $  imply that $(x, s, y)\in T$.

\noindent 
(P3) $ (x, s, t)\in T, (s, y, t)\in T $  imply that $(x, y, t)\in T$.

\noindent 
(P4) $ (s, x, t)\in T, (s, y, t)\in T $  imply that $(s, x, y)\in T$ or  $(s, y, x)\in T$.

\noindent 
(P5) $ (s, x, t)\in T, (s, y, t)\in T $   imply that $(s, x, y)\in T$ or  $(y, x, t)\in T$.

\noindent 
(P6) $ (x, s, t)\in T, (y, s, t)\in T $  imply that $(x, y, t)\in T$ or  $(y, x, t)\in T$.

\noindent 
(P7)  $ (x, s, t)\in T, (y, s, t)\in T $  imply that $(x, y, s)\in T$ or  $(y, x, s)\in T$.

\noindent 
(P8)  $ (x, s, t)\in T, (y, s, t)\in T $  imply that $(x, y, s)\in T$ or  $(y, x, t)\in T$.

%
%
%
%
%
%
%
%

Why are there exactly these rules for the transitivity-like properties of a ternary relation? To visually demonstrate these eight transitive hypotheses, we place four points $x ,y, s, t$ 
on a straight line, and then use a schematic diagrams (see Figure \ref{summary of four points in a line}) to explain the interrelationships among them.
If we do not specify the direction and position of these four points on the straight line, then according to statistical rules, there can be 24 possible arrangements.
However, if we agree to always present the positional relation of these four points in the direction from $s$ to $t$ (namely, the points $s$ and $t$ being fixed, and points $x$ and $y$ being moving), then these general rules can be reduced to 12 possible arrangements.

Next, we provide a detailed explanation of why the eight types of transitivity-like properties are obtained.
And we can conduct the transitivity analysis according to the following four types.

\noindent
{\bf case 1:} $s$ and $t$ are located in the middle of  $x$ and $y$. From Figure \ref{summary of four points in a line}, we get obtain (P1). 

\noindent
{\bf case 2:} one point of $s$ and $t$ are located in the middle of $x$ and $y$, and the other point is on the side of $x$ and $y$. 
From Figure \ref{summary of four points in a line}, we get (P2) and (P3), which are often used to define the transitivity in betweenness relations \cite{Jin2025, Pérez-2019, Shiyi-2022, Vel-1993, Zhang2019, Zhang2020, Zhang2020-2}.

\noindent
{\bf case 3:} $s$ and $t$ are located on both sides of  $x$ and $y$. From Figure \ref{summary of four points in a line}, we get (P4) and (P5).  

\noindent
{\bf case 4:} $s$ and $t$ are located on the same side of $x$ and $y$. From Figure \ref{summary of four points in a line}, we get (P6),  (P7) and (P8). 

In case 1, the conclusion (1) $\Rightarrow$ (1-1)  is in a similar manner to the conclusion
(1) $\Rightarrow$ (1-2). 
The reason is that the role of $s$ between $x$ and $y$ and the role of $t$ between $x$ and $y$ can essentially be regarded as the same.
Meanwhile, since the moving points $x$ and $y$ are at the endpoints and due to symmetry, we know that the essence of (1$^{*}$) $\Rightarrow$  (1$^{*}$-1) is the same as that of (1) $\Rightarrow$  (1-1).

In case 2, we can further divide it into four categories for analysis. Unlike case 1, the fixed points $s$ and $t$ do not simultaneously lie between the moving points $x$ and $y$. Therefore, we obtain two conclusions. Furthermore, the transitivity (P2) and the transitivity (P3) are somewhat similar to the transitivity of a binary relation.
This makes these two conclusions more widespread.

In case 3 and case 4, 
why are the transitivity conclusions different and why does the case 4 have an additional conclusion? 
Because the fixed points $s$ and $t$ are the endpoints in case 3 and 
considering the symmetry of the endpoints, 
the transitivity of 
`` (3) $\Rightarrow$ (3-2) or (3-2$'$)"  has the same transitive essence as well as (P4),
Therefore 
we only get two transitivity conclusions  
(P4) and (P5).
However, in case 4, the fixed points
$s$ and $t$ are not the endpoints. 
Therefore, we need to consider all three scenarios and obtain three types of transitivity of
(P6), (P7) and (P8).

\begin{figure}[htbp]
\centering  
\includegraphics[height=15.5cm, width=17cm]
{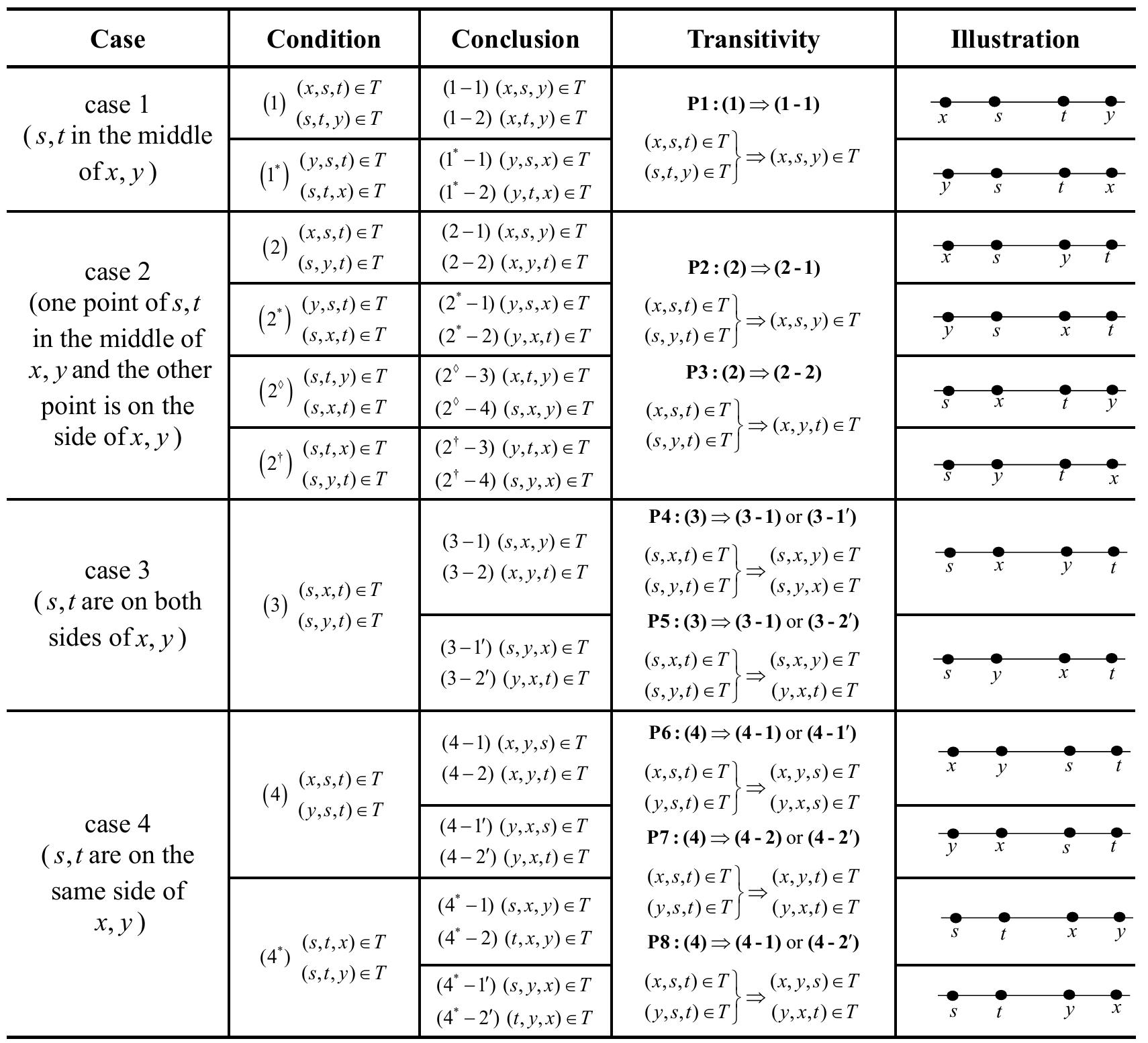}
\caption{A schematic 
diagram illustrating the four-point transitivity properties of ternary relations.} 
\label{summary of four points in a line}
\end{figure}

\subsubsection{Five-point transitivity properties of a ternary relation}

In \cite{Zedam-2020, Zedam-2021-2},
L. Zedam, N. Bakri and B. De Bates gave five-point transitivity properties from the perspective of compositions of ternary relations. 
Firstly, let us recall the definition of two types of 
compositions of a ternary relation with a binary relation introduced  in \cite{Bakri-2021, Zedam-2018}.

\begin{definition}[\cite{Bakri-2021, Zedam-2018}]
\label{compositions of a ternary relation with a binary relation}
Let $T$ be a ternary relation and let $R$ be a binary relation on $X$. Then
the \textit{compositions of $T$ with a binary relation  $R$} is defined by

\noindent 
(1)  
$T\ltimes R 
=\{ (x, y, z)\in X^{3} \mid 
\exists t\in X, ~ (x, y, t)\in T \wedge (t, z)\in R\};$

\noindent 
(2)  
$R\rtimes T
=\{ (x, y, z)\in X^{3} \mid 
\exists t\in X, ~ (x, t)\in R \wedge (t, y, z)\in T\}.$

\end{definition}

Based on the two types of compositions of a ternary relation with a binary relation, we recall the six types of composition of ternary relations by their binary projections, which are introduced by Zedam, Barkat and De Baets in \cite{Zedam-2020,Zedam-2021-2}.

\begin{definition}[\cite{Zedam-2020, Zedam-2021-2}]
\label{compositions of a ternary relation with a binary relation}
Let $T$ and $S$ be ternary relations on $X$. 
Then 
\textit{$\circ_{i}$-compositions ($i\in \{1, ..., 6\}$) of $T$ and $S$} 
are defined as follows.\vskip1mm

\noindent 
(1)  $T \circ_{1}S =T\ltimes P_{l}(S)
=\{ (x, y, z)\in X^{3} \mid 
\exists t, s\in X, ~ (x, y, t)\in T \wedge (s, t, z)\in S\}$;

\noindent 
(2)  $T \circ_{2}S =T\ltimes P_{m}(S)
=\{ (x, y, z)\in X^{3} \mid 
\exists t, s\in X, ~(x, y, t)\in T \wedge (t, s, z)\in S\}$;

\noindent 
\noindent 
(3)  $T \circ_{3}S  =T\ltimes P_{r}(S)
=\{ (x, y, z)\in X^{3} \mid 
\exists t, s\in X, ~(x, y, t)\in T \wedge (t, z, s)\in S\}$;

\noindent 
(4)  $T \circ_{4}S = P_{l}(T)\rtimes S
=\{ (x, y, z)\in X^{3} \mid 
\exists t, s\in X, ~(s, x, t)\in T \wedge (t, y, z)\in S\}$;

\noindent 
(5)  $T \circ_{5}S = P_{m}(T)\rtimes S
=\{ (x, y, z)\in X^{3} \mid 
\exists t, s\in X, ~ (x, s, t)\in T \wedge (t, y, z)\in S\}$;

\noindent 
(6)  $T \circ_{6}S = P_{r}(T)\rtimes S
=\{ (x, y, z)\in X^{3} \mid 
\exists t, s\in X, ~(x, s, t)\in T \wedge (s, y, z)\in S\}$;

\noindent
Among them,  
$P_{l}(T)=\{(x, y)\in X^{2} \mid \exists z\in X,~ (z, x, y)\in T\}$ represents the left projection of $T$.
$P_{m}(T)=\{(x, y)\in X^{2} \mid \exists z\in X,~ (x, z, y)\in T\}$ represents the middle projection of $T$.
And
$P_{r}(T)=\{(x, y)\in X^{2} \mid \exists z\in X,~ (x, y, z)\in T\}$ represents the right projection of $T$.
\end{definition}

The transitivity of a binary relation $R$ is defined as $R \circ R \subseteq R$,  $ \circ $ is 
the usual composition of binary relations. 
Similarly, we can present the five-point transitivity properties of a ternary relation from the perspective of the composition of ternary relations. 

\begin{definition}[\cite{ Zedam-2020,Zedam-2021-2}]
\label{five-point transitivity of a ternary relation}
Let $T$ be a ternary relations on $X$. 
Then $T$ is called 
\textit{$\circ_{i}$-transitive} 
if $T \circ_{i}T \subseteq T$ for any $i\in \{1, ..., 6\}$.
To be precise, let $x ,y, s, t \in X$. 
Then the five-point transitivity properties are shown as follows.

\noindent 
(T1) $ (x, y, t)\in T, (s, t, z)\in T $  imply that $(x, y, z)\in T$.

\noindent 
(T2) $ (x, y, t)\in T, (t, s, z)\in T $  imply that $(x, y, z)\in T$.

\noindent 
(T3) $ (x, y, t)\in T, (t, z, s)\in T $  imply that $(x, y, z)\in T$.

\noindent 
(T4) $ (s, x, t)\in T, (t, y, z)\in T $  imply that $(x, y, z)\in T$.

\noindent 
(T5) $ (x, s, t)\in T, (t, y, z)\in T $   imply that $(x, y, z)\in T$.

\noindent 
(T6) $ (x, s, t)\in T, (s, y, z)\in T $  imply that $(x, y, z)\in T$.
\end{definition}

In order to contrast with the four-point transitivity and also to visually show the five-point transitivity, we create a summary diagram (see Figure \ref{summary of five points in a line}).
It is particularly important to note that: 

\noindent
i) when $s = y$, (T1) of the five-point transitivity is exactly (P1) of the fourth-point transitivity, that is to say, (T1) is the generalization of (P1). 

\noindent
ii) when $t = z$, (T6) of the four-point transitivity is exactly (P3) of the fourth-point transitivity, that is to say, (T6) is the generalization of (P3).

\noindent
iii) No matter whether $s$ or $t$ takes the value of $x$ or $y$, (T2)-(T5) cannot be reduced to the four-point transitivity. That is to say, (T2)-(T5) are distinct from the four-point transitivity and possess their own unique five-point transitivity.

\begin{figure}[htbp]
\centering  
\includegraphics[height=7cm, width=17cm]
{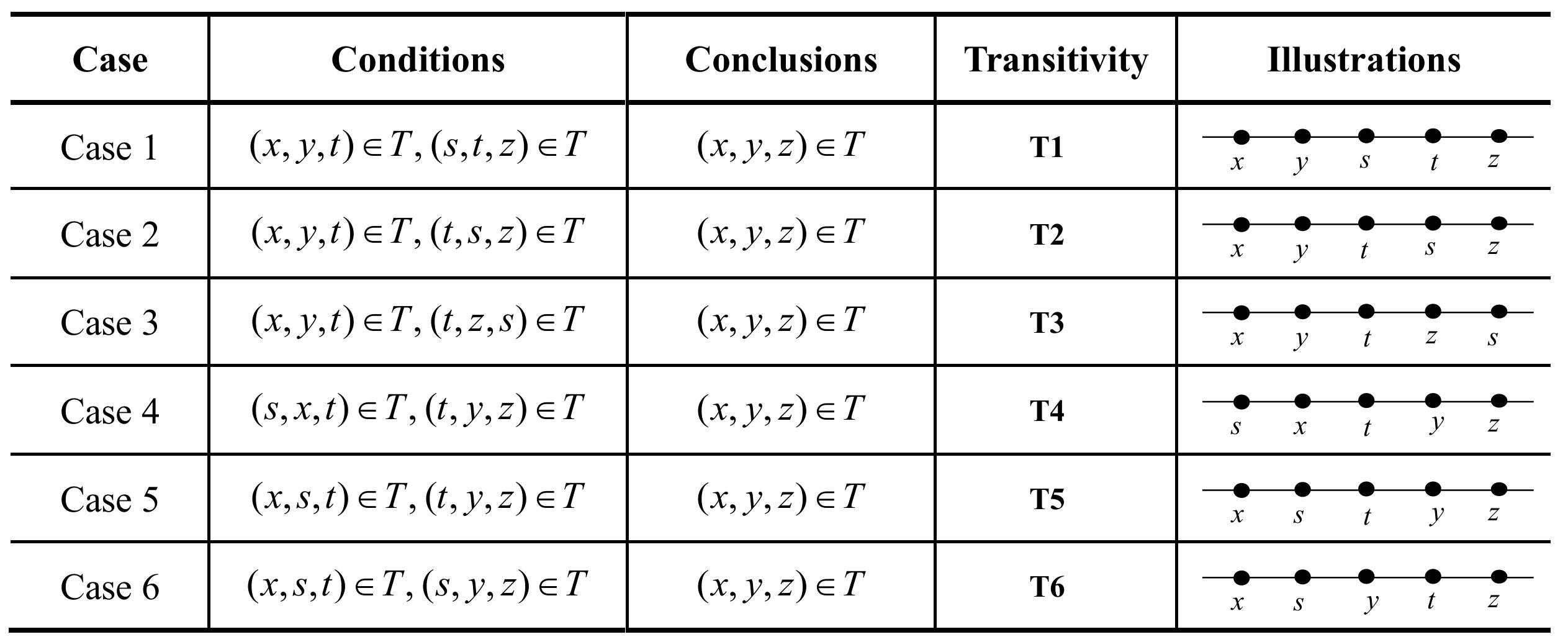}
\caption{A schematic 
diagram illustrating the five-point transitivity properties of ternary relations.} 
\label{summary of five points in a line}
\end{figure}

\subsection{Betweenness relations}

In this part, we focus on discussing a very
important type of the ternary relation,
that is, 
the betweenness relation. 
And we also introduce its connections with various mathematical structures.

The concept of a betweenness relation was first introduced by Pasch \cite{Pasch-1882},  further developed by Huntington and Kline \cite{Huntington-1917,Huntington-1924}.
Its introduction mainly aims to provide an axiomatic description of the geometric concept that 
``one element is located between two other elements".
It should be noted that currently there are many different types of definitions for the betweenness relation \cite{HedlÍková-1983, Vel-1993, Pérez-2019}, mainly because the transitivity of its ternary relationship varies greatly. Here, we adopt the definition  introduced by Pérez-Fernández and B. De Baets in \cite{Pérez-2019}.

\begin{definition}[\cite{Pérez-2019}]
\label{definition of betweenness relation}
A ternary relation $B\subseteq X^{3}$ is called a
\textit{betweenness relation}, 
if it satisfies the following conditions:
\vskip1mm
\noindent 
(B1) Symmetry in the end points: 
$\forall x, y, z\in X, ~(x, y, z)\in B \Leftrightarrow 
(z, y, x)\in B$.

\noindent 
(B2) Closure:
$\forall x, y, z\in X, ~(x, y, z)\in B \wedge
(x, z, y)\in B \Leftrightarrow y=z$.

\noindent 
(B3) End-point transitivity: 
$\forall o, x, y, z\in X, ~(o, x, y)\in B \wedge
(o, y, z)\in B \Rightarrow (o, x, z)\in B $.
\end{definition}

\begin{remark} \label{remark of betweenness relations}
\noindent (1)
The closure property  is equivalent to the followings:
\vskip1mm

\noindent 
(B4) Reflexivity:
$\forall x, y\in X, ~(x, y, y)\in B$.

\noindent 
(B5) Antisymmetry:
$\forall x, y, z\in X, ~(x, y, z)\in B \wedge
(x, z, y)\in B \Rightarrow y=z$.

\noindent (2)
Under the property of symmetry,
the end-point transitivity is actually the (P3) of the four-point transitivity. 
Here we provide a brief explanation as to why (P3) was chosen as the transitivity of the betweenness relation. 
In \cite{Huntington-1917}, Huntington and Kline 
presented four basic postulates for a betweenness relation, that is, 

\noindent 
A) Symmetry: $(a, x, b) \in B$ if and only if $(b, x, a) \in B$ for all $a, b, x\in X$.

\noindent 
B) For all $a, b, c\in X$, at least one of the following six conditions $(a, b, c) \in B, (a, c, b) \in B, (b, a, c) \in B, (b, c, a) \in B, (c, a, b) \in B, (c, b, a) \in B$ must be true.

\noindent 
C) Antisymmetry: $(a, x, y) \in B$ and $(a, y, x) \in B$ imply that $x=y$, for all $a, x, y\in X$.

\noindent 
D) Reflexivity: $(x, y, y) \in B$  for all $x, y\in X$.

Under these four basic postulates,
they had proved that (P6), (P7), and (P8) can be derived from (P2). And (P2) combined with (P1) can lead to (P4) and (P5). 
Also, (P2) combined with (P1) can lead to (P3).
All of them had been proved and shown in Table 1 of  \cite{Huntington-1917}.
Therefore, (P3) is the most important and crucial one in the four-point transitivity properties.
\end{remark}

\begin{example}\label{example of betweenness relation of order}
Let $(X, \leq)$ be a partially ordered set.
Then a \textit{order-betweenness relation} $B_{\leq}\subseteq X^{3}$ is defined by
$$B_{\leq}=\{
(x, y, z)\in X^{3} \mid (x=y)\vee (y=z)\vee (x\leq y\leq z)\vee (z\leq y\leq x)\}.$$
\end{example}

\begin{example}\label{example of betweenness relation of lattice}
Let $(X, \vee, \wedge)$ be a lattice structure.
Then a \textit{lattice-betweenness relation} $B_{\vee, \wedge}\subseteq X^{3}$ is defined by
$$B_{\vee, \wedge}=\{
(x, y, z)\in X^{3} \mid  (x\wedge y)\vee (y\wedge z)
= y = (x\vee y)\wedge (y\vee z)\}.$$
\end{example}

\begin{example}\label{example of betweenness relation of metric}
Let $(X, d)$ be a metric space.
Then a \textit{metric-betweenness relation} $B_{d}\subseteq X^{3}$ is defined by
$$B_{d}=\{
(x, y, z)\in X^{3} \mid d(x, z)=d(x, y)+d(y, z)\}.$$
Since $d(x, z) \leq d(x, y)+d(y, z)$ is trivial, 
it follows that $$B_{d}=\{
(x, y, z)\in X^{3} \mid d(x, z)=d(x, y)+d(y, z)\}
=\{
(x, y, z)\in X^{3} \mid d(x, z)\geq d(x, y)+d(y, z)\}.$$
It is particularly important that we can prove the metric-betweenness relation $B_{d}$ also satisfies the transitivity properties of (P1)-(P8) and (T1)-(T6).
\end{example}

There are many other examples existing in mathematical structures, such as median algebra, vector space, hypergroup, etc. 
However, in this article, subsequently we will focus on the betweenness relations induced by metrics and the fuzzy betweenness relations induced by fuzzy metrics.

\subsection{KM-fuzzy metric spaces}

In this part, we first recall the definitions of a metric and a $t$-norm. 
And then we introduce a variant of the original definition of 
Kramosil and Michalek's fuzzy metric, which now called KM-fuzzy metric.

\begin{definition}
[\cite{Frechet-1906}]\label{Definition of a metric}
A \textit{metric} on a set $X$ is a mapping 
$d: X\times X \rightarrow [0, \infty)$ satisfying the followings: 
(1) $d(x, y)\geq 0$;
(2) $d(x, y)=0 \Leftrightarrow  x=y$;
(3) $d(x, y)=d(y, x)$;
(4) $d(x, z)\leq d(x, y)+d(y, z)$ 
for any $x, y, z \in X$ .
And the pair $(X, d)$ is called a \textit{metric space}.
\end{definition}

\begin{definition}
[\cite{Klement-2000,Schweizer-1983}]\label{Definition of a t-norm}
A \textit{t-norm} on $[0, 1]$ is a binary operation 
$*: [0, 1]\times [0, 1] \rightarrow [0, 1]$ satisfying the followings: 
(1) $a*b=b*a$;
(2) $a*(b*c)=(a*b)*c$;
(3) $a*1=a$;
(4) $a\leq b \Rightarrow a*c\leq b*c$.
for all $ a, b, c\in [0, 1]$.
Obviously, $a*0=0$, $a*b\leq a\wedge b$ for any $a, b\in [0, 1]$.


The most common examples of  \textit{t-norms} are as follows.
(i) the minimum \textit{t-norm}: $a*_{M}b=a\wedge b$.
(ii) the product  \textit{t-norm}: $a*_{P}b=a \cdot b$.
(iii) the Łukasiewicz  \textit{t-norm}: $a*_{L}b=0 \vee (a+b-1)$.
Note that $*=\wedge$ is the strongest \textit{t-norm}.
\end{definition} 

Next, we recall the definition of a KM-fuzzy metric, 
and we also provide a detailed explanation of the reasons for each condition in the definition of KM fuzzy metric. 
First of all, we can equivalently regard a metric as a mapping
$\chi_{d}: X\times X\times[0, \infty)\rightarrow \{0, 1\}$ defined by
\begin{equation*}
\chi_{d}(x, y, t)=
\left\{
\begin{split}
&1,          && t>d(x, y);\\
&0,          && t\leq d(x, y).
\end{split}
\right.
\end{equation*}

\noindent Then  $\chi_{d}$ satisfies the followings: $\forall x, y, z\in X$, 
$\forall s, r, t\in [0, \infty)$,

\noindent (1) $\chi_{d}(x, y, 0)=0$;

\noindent (2) $\forall t>0, ~\chi_{d}(x, y, t)=1\Leftrightarrow x=y$;

\noindent (3) $\chi_{d}(x, y, t)=\chi_{d}(y, x, t)$;

\noindent (4) $\chi_{d}(x, y, s) * \chi_{d}(y, z, r)\leq \chi_{d}(x, z, s+r)$; \vskip1mm

\noindent (5)  $\bigvee\limits_{s<t}\chi_{d}(x, y, s)=\chi_{d}(x, y, t)$;
\vskip1mm

\noindent (6)  $\bigvee\limits_{t>0}\chi_{d}(x, y, t)=1$.
\vskip1mm

The conditions (5) and (6) are the corresponding generalizations of the following properties that naturally hold in the classical case, that is, 
``$\forall s<t, d(x, y)<s \Rightarrow d(x, y)<t$ ", and 
``$\exists t>0~~ s.t.~d(x, y)<t$ ", respectively.
Actually, (5) and (6) are naturally occurring inherent properties.
However, if we are to extend the definition of a metric to the fuzzy case, these conditions cannot be ignored, even they are very important in some conclusions.

%

Based on the above explanations and analyses, we give the definition of a KM-fuzzy metric in the following, which is a variant of the original definition of Kramosil and Michalek \cite{KramosilMichalek-1975}.

\begin{definition}[\cite{Mardones-2012}]
\label{Definition of KM-fuzzy metric}
A \textit{KM-fuzzy metric} is a mapping $M: X\times X\times [0, \infty)\rightarrow [0, 1]$ 
satisfying the followings:
$\forall x, y, z \in X$, $\forall t, s\in[0, \infty)$,

\noindent (FM1)~~ $M(x, y, 0)=0$;

\noindent (FM2)~~ $\forall t>0$,  
$M(x, y, t)=1 \Leftrightarrow  x=y$.

\noindent (FM3)~~ $M(x, y, t)=M(y, x, t)$;

\noindent (FM4)~~ $M(x, y, s)* M(y, z, r)\leq M(x, z, s+r)$;

\noindent (FM5)~~ $M(x, y, -): [0, \infty)\rightarrow [0,1]$ is left-continuours;

\noindent (FM6)~~ $lim_{t\rightarrow \infty}M(x, y, t)=1$.

\noindent
Then the triple $(X, M, *)$ is called a \textit{KM-fuzzy metric space}.
And  $M(x, y, t)$ is interpreted as the degree to which $d(x, y)<t$.
If $[0, 1]$ is reduced to $\{0, 1\}$, then Definition \ref{Definition of KM-fuzzy metric} reduces to the Definition \ref{Definition of a metric}
of the classical metric.
\end{definition}

\begin{remark}
\noindent (1) It is easy to see that (FM4) is equivalent to the following condition
\vskip1mm
\noindent (FM4$^{\dagger}$)~~ $\bigvee\limits_{s+r=t}M(x, y, s)* M(y, z, r)\leq M(x, z, t)$.
\vskip1mm
\noindent
This is very important. We will utilize this condition to construct a fuzzy betweenness relation 
induced by a KM-fuzzy metric.
\vskip1mm

\noindent (2) From conditions (FM2) and (FM4), it can be concluded that $M(x,y,-): [0, \infty)\rightarrow [0, 1]$ 
is non-decreasing. So (FM5) and (FM6) are equivalent to the following condition
\vskip1mm
\noindent (FM5$^{\dagger}$)~~$\bigvee_{s<t}M(x, y, s)=M(x, y, t)$;
\vskip1mm
\noindent (FM6$^{\dagger}$)~~$\bigvee_{t>0}M(x, y, t)=1$.
\end{remark}

In 1994, George and Veeramani \cite{GeorgeVeeramani-1997} mainly modified the conditions (FM1) and (FM5) in the definition of KM-fuzzy metric. 
The modified conditions are as follows: 

\noindent (FGV1)~~ $\forall t>0,~M(x, y, t)>0$ 

\noindent (FGV5)~~ $M(x, y, -): [0, \infty)\rightarrow [0,1]$ is continuours;

\noindent
The conditions for these modifications in the GV-fuzzy metric are merely to ensure that the induced classical topology is Hausdorff, without any background of fuzzy semantic extension.
And the GV-fuzzy metric cannot induce a fuzzy topology. 

From the above description and analysis, we can gain a deeper understanding of the KM-fuzzy metric. 
Compared with the GV-fuzzy metric, the KM-fuzzy metric is more in line with the extension of the fuzzy logic and semantics of the classical metric.



\begin{example}[\cite{Schweizer-1960}]\label{example of KM-fuzzy metric}
Given a metric space
$(X, d)$ and a non-decreasing, left-continuous function $G: [0, \infty)\rightarrow [0, 1]$ with 
$G(0)=0$ and $lim_{t\rightarrow \infty}G(t)=1$, define a mapping 
$M_{d}: X\times X\times [0, \infty)\rightarrow [0, 1]$ by $\forall x, y\in X, ~t\in [0, \infty)$,
\begin{equation*}
M_{d}(x, y, t)=
\left\{
\begin{split}
&G\left(\frac{t}{d(x, y)}\right),        &&~~ if x\neq y;\\
&0,          &&~~ if~ x=y,~t=0;\\
&1,          && ~~if ~x=y,~t>0.
\end{split}
\right.
\end{equation*}
Then $(X, M_{d}, *)$ is a KM-fuzzy metric space under any $t$-norm.  
If $G(t) = \frac{t}{t+1}$, then we obtain a very commonly used KM-fuzzy metric that is generally referred to as the standard one, that is, 
\begin{equation*}
M_{d}(x, y, t)=
\left\{
\begin{split}
&\frac{t}{t+d(x, y)},          && if ~t>0;\\
&0,          && if ~t=0.
\end{split}
\right.
\end{equation*}
\end{example}


\begin{example}[\cite{Zhongyu-2020}]
\label{example2 of KM-fuzzy metric}
Given a normed space
$(X, \|~ \|)$.
Define a mapping 
$M_{\|~ \|}: X\times X\times [0, \infty)\rightarrow [0, 1]$ by $\forall x, y\in X, ~t\in [0, \infty)$,
\begin{equation*}
M_{\|~ \|}(x, y, t)=
\left\{
\begin{split}
&e^{-\frac{\|x-y\|}{t}},          && if ~t>0;\\
&0,          && if ~t=0.
\end{split}
\right.
\end{equation*}
Then $(X, M_{\|~ \|}, \cdot)$ is a KM-fuzzy metric space under
the product $t$-norm.  
\end{example}

\section{\bf Betweenness relations in KM-fuzzy metric spaces}

In this section, we will explain how to derive a family of betweenness relations $\mathcal{B}^{M}=\{B_{d_{a}^{M}} : a\in (0,1)\}$ from a KM-fuzzy metric $M$ and 
$B_{d_{a}^{M}}=\bigcup_{a\leq b}B_{d_{b}^{M}}$. 
Before that, we need to discuss the relationship 
between a KM-fuzzy metric and its corresponding classical metrics.

\subsection{One-to-one correspondence between a KM-fuzzy metric and its nests of metrics}

Throughout this subsection,
unless otherwise stated, the definition of KM-fuzzy metrics is adopted to take the minimum $t$-norm,
that is, $*=\wedge$.

\begin{definition}[\cite{Mardones-2012}]\label{Definition of LSC}
A family of metrics $\{d_{a}: a\in (0, 1)\}$ is called a \textit{nest of metrics} (or called lower semicontinuous) 
if $d_{a}=\bigwedge_{b>a}d_{b}$ for any $a\in (0, 1)$.
\end{definition}

\begin{remark}\label{remark of nest of metrics}
It is easy to get 
$d_{a}$ is increasing with respect to $a$, that is, $d_{a}\leq d_{b}$ for any $a\leq b$.
\end{remark}

Although the following conclusions have already been presented in \cite{Mardones-2012}
(with the background of KM-fuzzy pseudometric), we will show that these conclusions are also valid in the KM-fuzzy metric space through different proof methods.

In the following theorem, we will present that 
a family of classical metrics can be generated by a KM-fuzzy metric and the set of these classical metrics form a nest of metrics.

\begin{theorem}\label{theorem of a nest of metric induced by a KM-fuzzy metric}
Let $(X, M, \wedge)$ be a KM-fuzzy metric space. For any $a\in (0, 1)$,
define a mapping $d_{a}^{M}: X\times X\rightarrow [0, \infty)$ by $\forall x, y\in X$,
$$d_{a}^{M}(x, y)=\bigvee\{t\in [0, \infty) \mid M(x, y, t)\leq a\}.$$
Then
\vskip1mm
\noindent {\rm (1)}
$M(x, y, t)\leq a \Leftrightarrow d_{a}^{M}(x, y)\geq t$, ~i.e.,~
$M (x, y, t)> a \Leftrightarrow d_{a}^{M}(x, y)<t$.
\vskip1mm
\noindent {\rm (2)}
$d_{a}^{M}(x, y)=\bigwedge\{t\in [0, \infty) \mid M(x, y, t)> a\}$.
\vskip1mm
\noindent {\rm (3)} For any $a\in (0, 1)$,
$d_{a}^{M}$ is a metric and $d_{a}^{M}=\bigwedge_{b>a}d_{b}^{M}$.
\vskip1mm
\end{theorem}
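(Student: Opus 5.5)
The plan is to work throughout with the set $A_a(x,y)=\{t\in[0,\infty)\mid M(x,y,t)\leq a\}$ and to exploit the shape of the function $M(x,y,-)$. By the remark after Definition \ref{Definition of KM-fuzzy metric}, this function is non-decreasing and satisfies (FM5$^{\dagger}$) and (FM6$^{\dagger}$). By (FM1), $0\in A_a(x,y)$, so the set is nonempty. Since $a<1$ and $\lim_{t\to\infty}M(x,y,t)=1$, the set is bounded, so $d_a^M(x,y)$ is a finite nonnegative number. Monotonicity shows $A_a(x,y)$ is an initial segment of $[0,\infty)$.

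Write $d=d_a^M(x,y)$. I will first show that $A_a(x,y)$ is exactly the closed interval $[0,d]$. If $d=0$ this is immediate. If $d>0$, then every $s<d$ lies in $A_a(x,y)$, and left-continuity (FM5$^{\dagger}$) gives $M(x,y,d)=\bigvee_{s<d}M(x,y,s)\leq a$. This is precisely statement (1), together with its contrapositive form. Statement (2) follows at once, because the complement $\{t\mid M(x,y,t)>a\}$ is then $(d,\infty)$, whose infimum is $d$.

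For (3), nonnegativity is clear, and symmetry follows from (FM3). Also $d_a^M(x,x)=0$, because by (FM2) $M(x,x,t)=1>a$ for every $t>0$.

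For the triangle inequality I will use (1) in its ``$>$'' form. Suppose $s>d_a^M(x,y)$ and $r>d_a^M(y,z)$. Then $M(x,y,s)>a$ and $M(y,z,r)>a$, so $M(x,y,s)\wedge M(y,z,r)>a$. Here it matters that $*=\wedge$: for a general $t$-norm this step fails. By (FM4), $M(x,z,s+r)>a$, hence $d_a^M(x,z)<s+r$. Letting $s,r$ decrease to their infima gives the inequality.

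For the nest property, I will first note that $a\leq b$ gives $A_a\subseteq A_b$, so $d_a^M\leq d_b^M$ and hence $d_a^M\leq\bigwedge_{b>a}d_b^M$. Conversely, take $t>d_a^M(x,y)$. Then $M(x,y,t)>a$, and I can choose $b$ with $a<b<M(x,y,t)$; if $M(x,y,t)=1$, choose any $b\in(a,1)$. Then (1) gives $d_b^M(x,y)<t$, so $\bigwedge_{b>a}d_b^M(x,y)\leq d_a^M(x,y)$.

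The step I expect to be the real obstacle is separation: showing $d_a^M(x,y)>0$ whenever $x\neq y$. By (1), this requires some $t>0$ with $M(x,y,t)\leq a$. However, (FM2) only gives $M(x,y,t)<1$ for $t>0$, and left-continuity says nothing about the behaviour as $t\to 0^+$. A function like $M(x,y,t)=0.9$ for small $t>0$ seems compatible with the axioms, and it would give $d_a^M(x,y)=0$ for $a<0.9$. My first attempt will be to derive $\lim_{t\to0^+}M(x,y,t)\leq a$ from (FM4) with $*=\wedge$. If that fails, the honest conclusion is that each $d_a^M$ is a pseudometric, as in \cite{Mardones-2012}. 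It is then a genuine metric under the additional hypothesis $\bigwedge_{t>0}M(x,y,t)=0$ for $x\neq y$, which holds, for instance, in Example \ref{example of KM-fuzzy metric}. I would state that hypothesis explicitly in the proof.
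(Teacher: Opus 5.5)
Everything you actually prove is correct and follows the paper's route. You get (1) from the initial-segment property plus left-continuity, and (2) as a reformulation of (1). In (3) you handle symmetry and $d_a^M(x,x)=0$, prove the triangle inequality through the ``$>$'' form of (1) and (FM4) with $\wedge$, and get the nest identity by choosing $b\in(a,M(x,y,t))$. You are more careful than the paper in two spots. First, you check $d_a^M(x,y)<\infty$ from (FM6) and $a<1$. Second, you take $s>d_a^M(x,y)$ and $r>d_a^M(y,z)$ directly. The paper instead starts from an arbitrary split of $t+s>d_a^M(x,y)+d_a^M(y,z)$ and asserts $d_a^M(x,y)<t$, $d_a^M(y,z)<s$, which only holds for suitable splits.

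On separation, abandon the attempt to derive $\lim_{t\to 0^+}M(x,y,t)\le a$. The claim in (3) that each $d_a^M$ is a metric is false, and your candidate gives a genuine counterexample. Let $X=\{p,q\}$ with $p\neq q$. Set $M(u,v,0)=0$, and for $t>0$ set $M(u,u,t)=1$ and $M(p,q,t)=M(q,p,t)=\frac{9+t}{10+t}$.
\begin{itemize}
\item (FM1), (FM3), (FM5), (FM6) are immediate.
\item Since $M(p,q,t)<1$ for all $t>0$, (FM2) holds under either reading.
\item (FM4) with $\wedge$ holds. If the two outer points coincide, the right-hand side is $1$ for $s+r>0$ and the left-hand side is $0$ for $s=r=0$. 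If they differ, the left-hand side is at most $M(p,q,s)$ or $M(p,q,r)$, hence at most $M(p,q,s+r)$ by monotonicity.
\end{itemize}
Since $M(p,q,t)>\frac{9}{10}$ for all $t>0$, every $a\le\frac{9}{10}$ gives $\{t\mid M(p,q,t)\le a\}=\{0\}$, so $d_a^M(p,q)=0$.

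The paper's proof of this step has a quantifier slip. From $d_a^M(x,y)=0$ for the fixed $a$, it infers $M(x,y,t)>a$ ``for any $a\in(0,1)$'' and then $M(x,y,t)=1$. Only the fixed $a$ is available. The argument therefore shows only that $x=y$ when $d_a^M(x,y)=0$ for \emph{all} $a$: the nest separates points jointly, while each $d_a^M$ is only a pseudometric.

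Your repair is exactly right, and it is sharp: all $d_a^M$ are metrics iff $\bigwedge_{t>0}M(x,y,t)=0$ whenever $x\neq y$. Indeed, if this infimum $L$ is positive, then every $a\in(0,L)$ gives $d_a^M(x,y)=0$.

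Restrict your remark about the Schweizer--Sklar example to the standard choice $G(u)=\frac{u}{u+1}$. The choice $G(u)=\max\{\frac12,\frac{u}{1+u}\}$ on $(0,\infty)$ is admissible there, and for it $d_a^{M_d}\equiv 0$ for every $a<\frac12$.

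Finally, the later steps that deduce equality of points from the vanishing of a single $d_\alpha^M$ inherit this gap. In the two-point example, both $(p,p,q)$ and $(p,q,p)$ lie in $B_{d_a^M}$ for $a\le\frac{9}{10}$.
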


\begin{proof}
\noindent (1)
By the definition of $d_{a}^{M}$, it is easily to seen that $M(x, y, t)\leq a$ implies
$d_{a}^{M}(x, y)\geq t$.
On the other hand, suppose that $d_{a}^{M}(x, y)\geq t$. Take any 
$s<t\leq d_{a}^{M}(x, y)=\bigvee\{t\in [0, \infty) \mid M(x, y, t)\leq a\}$, there exists some $t_{0}$ with
$M(x, y, t_{0})\leq a$ such that $s< t_{0}$.
Then $M(x, y, s)\leq M(x, y, t_{0})\leq a$.
So $\bigvee_{s<t}M(x, y, s)=M(x, y, t)\leq a$.

\noindent (2)
It can be easily concluded from conclusion (1).

\noindent (3)
We firstly prove $d_{a}^{M}$ is a metric for any
$a\in (0, 1)$ and then prove $d_{a}^{M}=\bigwedge_{b>a}d_{b}^{M}$.

\noindent
i) $d_{a}^{M}(x, x)=\bigvee\emptyset=0$.

\noindent
ii) If $d_{a}^{M}(x, y)=0$, then 
$d_{a}^{M}(x, y)<t$ for any $t>0$.
By (1), we know $M(x, y,t)>a$ for any 
$a\in (0, 1)$.
Then $M(x, y,t)=1$ for any $t>0$.
By (FM2), we get $x=y$.

\noindent
iii)  $d_{a}^{M}(x, y)=d_{a}^{M}(y, x)$ is trivial. 

\noindent
iv) we need to prove 
$d_{a}^{M}(x, z)\leq d_{a}^{M}(x, y)+d_{a}^{M}(y, z)$.
Take any $t+s>0$ and $d_{a}^{M}(x, y)+d_{a}^{M}(y, z)<t+s$.
Then $d_{a}^{M}(x, y)<t$ and $d_{a}^{M}(y, z)<s$, 
which imply $M(x, y, t)>a$ and $M(y, z, s)>a$.
So $M(x, z, t+s)\geq M(x, y, t)\wedge M(y, z, s)>a$.
Hence $d_{a}^{M}(x, z)<t+s$.
By the arbitrariness of $t$ and $s$, we get 
$d_{a}^{M}(x, z)\leq d_{a}^{M}(x, y)+d_{a}^{M}(y, z)$.
Combing (i)-(iv), we get 
$d_{a}^{M}$ is a metric for any $a\in (0, 1)$.

\noindent
v) 
From the definition of $d_{a}^{M}$, 
it is easy to see $d_{b}^{M}\geq d_{a}^{M}$ for any $a<b$.
So $\bigwedge_{b>a}d_{b}^{M}\geq d_{a}^{M}$. 
On the other hand, take any $t>0$ with $d_{a}^{M}(x, y)<t$. 
Then $M(x, y, t)>a$ and
there exist $b>a$ such that $M(x, y,t)>b>a$.
So $d_{b}^{M}(x, y)<t$.
Hence $\bigwedge_{b>a}d_{b}^{M}(x, y)<t$. 
By the arbitrariness of $t$, we get $\bigwedge_{b>a}d_{b}^{M}\leq d_{a}^{M}$.
Therefore $\bigwedge_{b>a}d_{b}^{M}=d_{a}^{M}$.
\end{proof} 

Next, we shall show that a KM-fuzzy metric can also be constructed by a nest of metrics.

\begin{theorem}\label{theorem of a KM-fuzzy metric induced by a nest of metrics}
Let $\mathcal{D}=\{d_{a}: a\in (0, 1)\}$ be a nest of metrics. 
Define a mapping $M^{\mathcal{D}}: X\times X\rightarrow [0, \infty)\rightarrow [0, 1]$ by $\forall x, y\in X$, $\forall t\in [0, \infty)$,
$$M^{\mathcal{D}}(x, y, t)=\bigwedge\{a\in (0, 1) \mid d_{a}(x, y)\geq t\}.$$
Then
\vskip1mm
\noindent {\rm (1)}
$d_{a}(x, y)\geq t \Leftrightarrow M^{\mathcal{D}}(x, y, t)\leq a $, ~i.e.,~
$d_{a}(x, y)< t \Leftrightarrow M^{\mathcal{D}}(x, y, t)> a $
\vskip1mm
\noindent {\rm (2)}
$M^{\mathcal{D}}(x, y, t)=\bigvee\{a\in (0, 1) \mid d_{a}(x, y)< t\}$.
\vskip1mm
\noindent {\rm (3)}  
$(X, M^{\mathcal{D}}, \wedge)$ is a KM-fuzzy metric space. 
\end{theorem}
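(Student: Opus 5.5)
The plan is to prove (1) first, using the nest condition, and then derive (2) and (3) from it, mirroring the proof of Theorem \ref{theorem of a nest of metric induced by a KM-fuzzy metric}. Throughout, the infimum of the empty subset of $(0,1)$ is taken to be $1$, so that $M^{\mathcal{D}}$ takes values in $[0,1]$.

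For (1), fix $x,y,t$ and put $A=\{a\in(0,1)\mid d_{a}(x,y)\geq t\}$. By Remark \ref{remark of nest of metrics}, $d_a$ is increasing in $a$, so $A$ is an up-set of $(0,1)$.

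The direction $d_a(x,y)\geq t\Rightarrow M^{\mathcal{D}}(x,y,t)\leq a$ is immediate from the definition as an infimum.

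For the converse, suppose $M^{\mathcal{D}}(x,y,t)\leq a$ with $a\in(0,1)$. Then $\bigwedge A\leq a<1$, so $A\neq\emptyset$.
\begin{itemize}
\item If $\bigwedge A<a$, some element of $A$ lies below $a$, and the up-set property gives $a\in A$.
\item If $\bigwedge A=a$, then every $b>a$ with $b<1$ is in $A$. The nest condition (Definition \ref{Definition of LSC}) then gives $d_a(x,y)=\bigwedge_{b>a}d_b(x,y)\geq t$, so again $a\in A$.
\end{itemize}
The nest condition is needed precisely to make the infimum of $A$ attained. This boundary case, together with the empty-set convention, is the only delicate point, and I expect it to be the main obstacle. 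The "i.e." form of (1) is the contrapositive.

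For (2), note that by (1) the set $\{a\mid d_a(x,y)<t\}$ is exactly $\{a\in(0,1)\mid a<M^{\mathcal{D}}(x,y,t)\}$. Its supremum is therefore $M^{\mathcal{D}}(x,y,t)$, taking the supremum of the empty set to be $0$.

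For (3), I would verify (FM1)--(FM6) with $*=\wedge$, using the characterization ``$M^{\mathcal{D}}(x,y,t)>a\Leftrightarrow d_a(x,y)<t$'' from (1) throughout.
\begin{itemize}
\item (FM1): Since $d_a\geq 0$, every $a$ lies in $A$ when $t=0$, so $M^{\mathcal{D}}(x,y,0)=\bigwedge(0,1)=0$.
\item (FM2): I read it as ``$M(x,y,t)=1$ for all $t>0$ iff $x=y$'', as it is used in Theorem \ref{theorem of a nest of metric induced by a KM-fuzzy metric}. If $x=y$ and $t>0$, then $d_a(x,x)=0<t$ for all $a$, so $M^{\mathcal{D}}>a$ for all $a$ and hence equals $1$. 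Conversely, if $M^{\mathcal{D}}(x,y,t)=1$ for all $t>0$, then $d_a(x,y)<t$ for all $t>0$, so $d_a(x,y)=0$ and $x=y$.
\item (FM3): This follows from the symmetry of each $d_a$.
\item (FM4): Take any $a<M^{\mathcal{D}}(x,y,s)\wedge M^{\mathcal{D}}(y,z,r)$. Then $d_a(x,y)<s$ and $d_a(y,z)<r$. The triangle inequality for $d_a$ gives $d_a(x,z)<s+r$, i.e. $M^{\mathcal{D}}(x,z,s+r)>a$. Letting $a$ increase to the minimum gives the inequality. The cases $s=0$ or $r=0$ are trivial by (FM1).
\item (FM5): I would use (2) and the equivalence $d_a(x,y)<t\Leftrightarrow\exists s<t,\ d_a(x,y)<s$. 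This gives
\[
M^{\mathcal{D}}(x,y,t)=\bigvee_{a}\bigvee_{s<t}[\,d_a(x,y)<s\,]\cdot a=\bigvee_{s<t}M^{\mathcal{D}}(x,y,s).
\]
Here the bracket denotes the indicator. Interchanging the two suprema yields (FM5$^{\dagger}$), hence left-continuity.
\item (FM6): For each $a\in(0,1)$, $d_a(x,y)$ is finite. So for $t>d_a(x,y)$ we get $M^{\mathcal{D}}(x,y,t)>a$. Since $M^{\mathcal{D}}(x,y,-)$ is non-decreasing, its limit is at least every $a<1$, hence equals $1$.
\end{itemize}
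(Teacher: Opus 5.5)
Your proposal is correct and follows essentially the same route as the paper. Both derive the level-set equivalence (1) from the nest condition $d_a=\bigwedge_{b>a}d_b$, read off (2), and then verify (FM1)--(FM6) through $M^{\mathcal{D}}(x,y,t)>a\Leftrightarrow d_a(x,y)<t$. Two of your steps are somewhat cleaner than the paper's. In (FM4) you work with strict inequalities, whereas the paper infers ``$d_a(x,y)\geq t$ and $d_a(y,z)\geq s$'' from $d_a(x,y)+d_a(y,z)\geq t+s$, where only ``or'' is justified (though that still suffices for the minimum). In (FM6) your argument spells out the supremum--infimum interchange that the paper leaves terse.
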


\begin{proof}
\noindent (1)
By the definition of $M^{\mathcal{D}}$, 
it is easily to seen that $d_{a}(x, y)\geq t$ implies
$M^{\mathcal{D}}(x, y, t)\leq a$.
On the other hand, 
suppose that $M^{\mathcal{D}}(x, y, t)\leq a$. 
Take any $b>a\geq M^{\mathcal{D}}(x, y, t)=\bigwedge\{a\in (0, 1) \mid d_{a}(x, y)\geq t\}$, there exists some $a_{0}\in (0, 1)$ with $d_{a_{0}}(x, y)\geq t$ such that $ b>a_{0}$.
So $d_{b}(x, y)\geq d_{a_{0}}(x, y)\geq t$.
Since $d_{a}=\bigwedge_{b>a}d_{b}$,
it follows that $\bigwedge_{b>a}d_{b}(x, y)=d_{a}(x, y)\geq t$.

\noindent (2)
It can be easily obtained from conclusion (1).

\noindent (3) We need to check that $M^{\mathcal{D}}$ satisfy the conditions
(FM1)-(FM6).

\noindent
(FM1) $M^{\mathcal{D}}(x, y, 0)=\bigwedge_{a\in (0,1)}a=0$.

\noindent
(FM2) For any $t>0$,
$M^{\mathcal{D}}(x, x, t)=\bigwedge\emptyset=1$.
On the other hand, 
if $M^{\mathcal{D}}(x, y, t)=1$ for any  $t>0$,
then $M^{\mathcal{D}}(x, y, t)>a$. 
By (1), we get $d_{a}(x, y)<t$ for any $t>0$.
This implies $d_{a}(x, y)=0$.
So $x=y$.

\noindent
(FM3)  $M^{\mathcal{D}}(x, y, t)=M^{\mathcal{D}}(y, x, t)$ is trivially. 

\noindent
(FM4) we need to prove 
$M^{\mathcal{D}}(x, y, t))\wedge M^{\mathcal{D}}(y, z, s))\leq M^{\mathcal{D}}(x, z, t+s))$.
Take any $a>0$ with $M^{\mathcal{D}}(x, z, t+s))\leq a$. Then $d_{a}(x, z)\geq t+s$.
Since 
$d_{a}(x, y)+d_{a}(y, z)\geq d_{a}(x, z)\geq t+s$,
we know $d_{a}(x, y)\geq t$ and 
$d_{a}(y, z)\geq s$.
Then $M^{\mathcal{D}}(x, y, t))\leq a$ and 
$M^{\mathcal{D}}(y, z, s))\leq a$.
So $M^{\mathcal{D}}(x, y, t))\wedge M^{\mathcal{D}}(y, z, s))\leq a$.
By the arbitrariness of $t$, we get 
$M^{\mathcal{D}}(x, y, t))\wedge M^{\mathcal{D}}(y, z, s))\leq M^{\mathcal{D}}(x, z, t+s))$.

\noindent
(FM5) we prove the equivalent conditions (FM5$^{\dagger}$), that is,
$\bigvee_{s<t}M^{\mathcal{D}}(x, y, s)=M^{\mathcal{D}}(x, y, t)$.
On one hand, $\{a\in (0, 1) \mid d_{a}(x, y)\geq t\}\subseteq \{a\in (0, 1) \mid d_{a}(x, y)\geq s\}$
for any $s<t$.
Then $M^{\mathcal{D}}(x, y, t)\geq M^{\mathcal{D}}(x, y, s)$.
So $M^{\mathcal{D}}(x, y, t)\geq \bigvee_{s<t}M^{\mathcal{D}}(x, y, s)$.
On the other hand, take any $a>0$ with $a<M^{\mathcal{D}}(x, y, t)$. 
Then $d_{a}(x, y)<t$ 
and there exists some $s<t$ such that
$d_{a}(x, y)<s<t$.
So $a<M^{\mathcal{D}}(x, y, s)\leq \bigvee_{s<t}M^{\mathcal{D}}(x, y, s)$.
By the arbitrariness of $t$, we get 
$M^{\mathcal{D}}(x, y, t)\leq \bigvee_{s<t}M^{\mathcal{D}}(x, y, s)$.
Therefore $M^{\mathcal{D}}(x, y, t)= \bigvee_{s<t}M^{\mathcal{D}}(x, y, s)$.

\noindent
(FM6) we prove the equivalent conditions (FM6$^{\dagger}$), that is, $\bigvee_{t>0}M^{\mathcal{D}}(x, y, t)=1$.
From the definition of $M^{\mathcal{D}}$, 
we know
$\bigvee_{t>0}M^{\mathcal{D}}(x, y, t)=\bigvee_{t>0}\bigwedge \{
a\in (0,1) \mid d_{a}(x, y)\geq t\}=\bigwedge \emptyset=1$.
\end{proof} 

Finally, we shall show that there is a one-to-one correspondence between a KM-fuzzy metric and its nests of metrics through the following theorem and the summary diagram (see Figure \ref{summary2 of M=MDM}).

\begin{figure}[htbp]
\centering  
\includegraphics[height=5cm, width=17cm]
{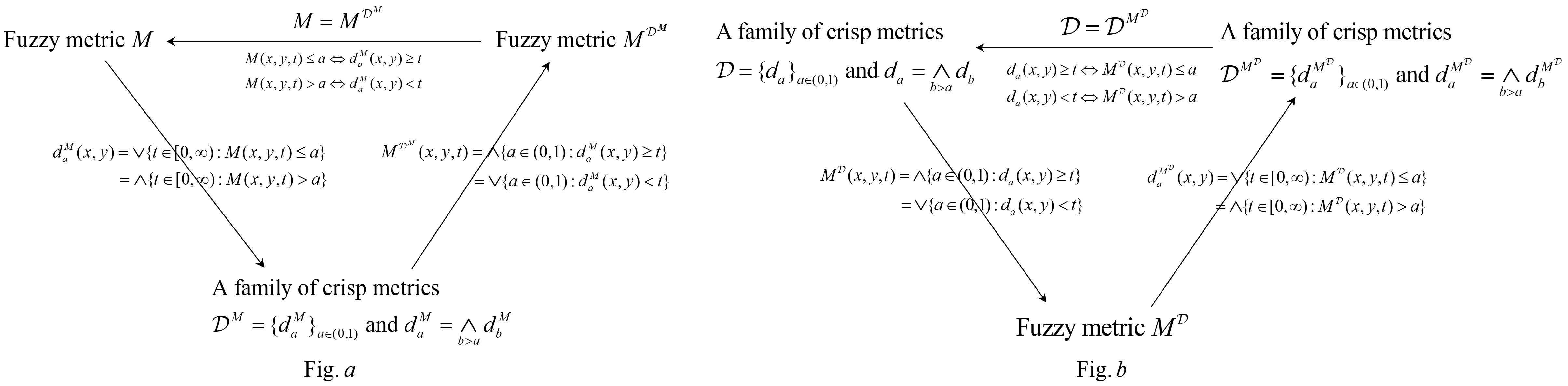}
\caption{One-to-one correspondence between a KM-fuzzy metric and its corresponding nests of metrics.} 
\label{summary2 of M=MDM}
\end{figure}

\begin{theorem}\label{one-to-one correspondence between a KM-fuzzy metric and its nests of metrics}
Let $(X, M, \wedge)$ be a KM-fuzzy metric space and
let $\mathcal{D}=\{d_{a}: a\in (0, 1)\}$ be a nest of metrics. 
Then 
$M^{\mathcal{D}^{M}}=M$ and 
$\mathcal{D}^{M^{\mathcal{D}}}=\mathcal{D}$.
\end{theorem}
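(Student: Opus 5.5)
The plan is to compare the two sides pointwise through the Galois-type equivalences established in the two preceding theorems, rather than through the explicit infima and suprema. Both objects in each identity are determined by the strict-inequality predicates ``$M(x,y,t)>a$'' and ``$d_a(x,y)<t$''. So it suffices to show that these predicates agree for every $x,y\in X$, $t\in[0,\infty)$ and $a\in(0,1)$.

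\textbf{First identity.} Fix $x,y,t$ and $a\in(0,1)$. By Theorem \ref{theorem of a nest of metric induced by a KM-fuzzy metric}(3), $\mathcal{D}^{M}=\{d_a^M\}$ is a nest of metrics, so Theorem \ref{theorem of a KM-fuzzy metric induced by a nest of metrics} applies to it. Part (1) of that theorem gives
$$M^{\mathcal{D}^{M}}(x,y,t)>a \Leftrightarrow d_a^M(x,y)<t.$$
Part (1) of Theorem \ref{theorem of a nest of metric induced by a KM-fuzzy metric} gives $d_a^M(x,y)<t \Leftrightarrow M(x,y,t)>a$. Hence, for every $a\in(0,1)$,
$$M^{\mathcal{D}^{M}}(x,y,t)>a \Leftrightarrow M(x,y,t)>a.$$
Two numbers in $[0,1]$ that exceed exactly the same elements of $(0,1)$ are equal: if they differed, some $a\in(0,1)$ would lie strictly between them. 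Therefore $M^{\mathcal{D}^{M}}=M$.

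\textbf{Second identity.} By Theorem \ref{theorem of a KM-fuzzy metric induced by a nest of metrics}(3), $M^{\mathcal{D}}$ is a KM-fuzzy metric (with $*=\wedge$), so Theorem \ref{theorem of a nest of metric induced by a KM-fuzzy metric} applies to it. Chaining the two parts (1) in the other order gives
$$d_a^{M^{\mathcal{D}}}(x,y)<t \Leftrightarrow M^{\mathcal{D}}(x,y,t)>a \Leftrightarrow d_a(x,y)<t \quad\text{for all } t\ge 0.$$
Two nonnegative reals below which exactly the same $t\ge0$ lie are equal, by the same density argument on $[0,\infty)$. Hence $d_a^{M^{\mathcal{D}}}=d_a$ for every $a$, i.e. $\mathcal{D}^{M^{\mathcal{D}}}=\mathcal{D}$.

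\textbf{Main obstacle.} The one delicate point is the boundary values $0$ and $1$, where the density argument needs care.
\begin{itemize}
\item For $M$: if $M(x,y,t)=1$ and $M^{\mathcal{D}^{M}}(x,y,t)=c<1$, there is still some $a\in(c,1)$ that separates them, so the argument goes through. The same holds for the value $0$.
\item For metrics: the value $d=0$ is handled because we quantify over all $t\ge0$, including $t$ strictly between the two values.
\item Throughout we use that both theorems' parts (1) were proved for all $t\in[0,\infty)$, including $t=0$. That case is consistent with (FM1), since both sides are false there.
\end{itemize}
No further computation should be needed.
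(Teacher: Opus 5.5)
Your proposal is correct and is essentially the paper's proof: both chain the part~(1) equivalences of Theorems~\ref{theorem of a nest of metric induced by a KM-fuzzy metric} and~\ref{theorem of a KM-fuzzy metric induced by a nest of metrics}, the paper in the non-strict form ($\leq a$, $\geq t$) and you in the equivalent strict form ($>a$, $<t$). You also spell out two points the paper leaves implicit: that $\mathcal{D}^{M}$ is a nest and $M^{\mathcal{D}}$ is a KM-fuzzy metric, so the earlier theorems apply, and the separation argument that turns agreement of all level conditions into equality.
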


\begin{proof}
Based on Theorem \ref{theorem of a nest of metric induced by a KM-fuzzy metric} and Theorem 
\ref{theorem of a KM-fuzzy metric induced by a nest of metrics}, we can easily get the following conclusions. 
For all $x, y, z\in X$,
for any $a\in (0,1)$ and for any $t\in [0, \infty)$,
\begin{eqnarray*}
&&M^{\mathcal{D}^{M}}(x, y, z)\leq a
\Leftrightarrow d_{a}^{M}(x, y)\geq t
\Leftrightarrow M(x, y, z)\leq a\\
&&d_{a}^{M^{\mathcal{D}}}(x, y)\geq t
\Leftrightarrow M^{\mathcal{D}}(x, y, t)\leq a
\Leftrightarrow d_{a}(x, y)\geq t.
\end{eqnarray*}
Therefore $M^{\mathcal{D}^{M}}=M$ and 
$\mathcal{D}^{M^{\mathcal{D}}}=\mathcal{D}$.
\end{proof}


\subsection{Betweenness relations in KM-fuzzy metric spaces}

In the first part, we have showed that 
a family of metrics $\{d_{a}^{M}\}_{a\in (0, 1)}$ can be generated by a KM-fuzzy metric.
Based on that, we will obtain a family of betweenness relations $\{B_{d_{a}^{M}}\}_{a\in (0, 1)}$  
and these betweenness relations also have the following conclusions.

\begin{theorem}\label{theorem1 of betweenness relations in KM-fuzzy metric spaces}
Let $(X, M, \wedge)$ be a KM-fuzzy metric space. 
For any $a\in (0, 1)$, define 
\begin{eqnarray*}
B_{d_{a}^{M}}
&=&\{(x, y, z)\in X^{3} \mid 
d_{a}^{M}(x, z)=d_{a}^{M}(x, y)+d_{a}^{M}(y, z)\}\\
&=&\{(x, y, z)\in X^{3} \mid 
d_{a}^{M}(x, z)\geq d_{a}^{M}(x, y)+d_{a}^{M}(y, z)\}.
\end{eqnarray*}
Then 

\noindent{\rm (1)}
$B_{d_{a}^{M}}$ is a betweenness relation, where
$d_{a}^{M}(x, y)=\bigvee\{t\in [0, \infty) \mid M(x, y, t)\leq a\}$.

\noindent{\rm (2)} For any $a\leq b$,
$B_{d_{b}^{M}}\subseteq B_{d_{a}^{M}}$.

\noindent{\rm (3)} For any $a\leq b$,
$d_{b}^{M}(x, z)\geq d_{b}^{M}(x, y)+d_{b}^{M}(y, z)
\Rightarrow d_{a}^{M}(x, z)\geq d_{a}^{M}(x, y)+d_{a}^{M}(y, z).$
\end{theorem}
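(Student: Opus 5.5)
Part (1) is a direct consequence of Theorem \ref{theorem of a nest of metric induced by a KM-fuzzy metric}(3) together with Example \ref{example of betweenness relation of metric}. By that theorem each $d_{a}^{M}$ is a genuine metric on $X$, so it is enough to check that the metric-betweenness relation $B_{d}$ of any metric $d$ satisfies (B1)--(B3). The two descriptions of $B_{d_{a}^{M}}$ in the statement coincide because the triangle inequality for $d_{a}^{M}$ gives the reverse inequality.
\begin{itemize}
\item[(B1)] This follows from the symmetry of $d$.
\item[(B4)] Reflexivity holds because $d(x,y)=d(x,y)+d(y,y)$.
\item[(B5)] Antisymmetry: add the two equalities $d(x,z)=d(x,y)+d(y,z)$ and $d(x,y)=d(x,z)+d(z,y)$. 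This gives $2d(y,z)=0$, so $y=z$. By Remark \ref{remark of betweenness relations}(1), (B4) and (B5) give (B2).
\item[(B3)] End-point transitivity: from $d(o,y)=d(o,x)+d(x,y)$ and $d(o,z)=d(o,y)+d(y,z)$ we get $d(o,z)=d(o,x)+d(x,y)+d(y,z)\geq d(o,x)+d(x,z)$. Using the ``$\geq$'' description of $B_{d}$, this yields $(o,x,z)\in B_{d}$.
\end{itemize}

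Parts (2) and (3) say the same thing. Part (3) is part (2) written with the ``$\geq$'' description, so I would prove only (3). The plan is to translate everything into $M$ using Theorem \ref{theorem of a nest of metric induced by a KM-fuzzy metric}(1), i.e.\ $d_{c}^{M}(u,v)\geq t\Leftrightarrow M(u,v,t)\leq c$.

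\begin{enumerate}
\item Assume $d_{b}^{M}(x,z)\geq d_{b}^{M}(x,y)+d_{b}^{M}(y,z)$.
\item Suppose, for a contradiction, that $d_{a}^{M}(x,z)<d_{a}^{M}(x,y)+d_{a}^{M}(y,z)$.
\item Choose $s<d_{a}^{M}(x,y)$ and $r<d_{a}^{M}(y,z)$ with $d_{a}^{M}(x,z)<s+r$. Then $M(x,z,s+r)>a$, while $M(x,y,s)\leq a$ and $M(y,z,r)\leq a$.
\item Try to contradict the assumption at level $b$, using the monotonicity $d_{a}^{M}\leq d_{b}^{M}$ (Remark \ref{remark of nest of metrics}) and (FM4) with $*=\wedge$.
\end{enumerate}

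This last step is the main obstacle, and I expect it to fail as stated. Condition (FM4) only gives lower bounds on $M(x,z,\cdot)$, and the level-$a$ and level-$b$ metrics are tied together only by monotonicity. A concrete test case makes this explicit. Take $X=\{x,y,z\}$ and the nest with $d_{c}=d$ for $c<\tfrac12$ and $d_{c}=d'$ for $c\geq\tfrac12$, where
\[
d'(x,y)=d'(y,z)=1,\quad d'(x,z)=2,\qquad d(x,y)=d(y,z)=1,\quad d(x,z)=1.5 .
\]
This family is a nest, and by Theorems \ref{theorem of a KM-fuzzy metric induced by a nest of metrics} and \ref{one-to-one correspondence between a KM-fuzzy metric and its nests of metrics} it arises from a KM-fuzzy metric. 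In it $(x,y,z)\in B_{d_{b}}$ for $b\geq\tfrac12$, but $(x,y,z)\notin B_{d_{a}}$ for $a<\tfrac12$.

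So before writing the argument I would first double-check this example. If it survives, I would repair (2)--(3) rather than prove them as stated. One option is to reverse the inclusion; but the same kind of example with the roles of $d$ and $d'$ arranged oppositely appears to break the reverse inclusion as well. The other option is to add a hypothesis under which the monotone step goes through, e.g.\ nests of the form $d_{a}=\varphi(a)\,d$ as in Example \ref{example of KM-fuzzy metric}. For such nests $B_{d_{a}}=B_{d}$ for every $a$, and the statement becomes trivial.
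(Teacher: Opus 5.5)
Your proof of (1) is essentially the paper's. Both rest on $d_a^M$ being a metric (Theorem \ref{theorem of a nest of metric induced by a KM-fuzzy metric}(3)) and then check (B1), (B4), (B5), (B3) directly. Your forward derivation of (B3) via the ``$\geq$'' description is the paper's contradiction argument run the other way.

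For (2)--(3) your counterexample is correct, so these parts are false as stated and cannot be proved. The KM-fuzzy metric is $M(u,u,0)=0$, $M(u,u,t)=1$ for $t>0$, and
\[
M(x,y,t)=M(y,z,t)=\begin{cases}0, & t\le 1,\\ 1, & t>1,\end{cases}\qquad
M(x,z,t)=\begin{cases}0, & t\le 1.5,\\ \tfrac12, & 1.5<t\le 2,\\ 1, & t>2.\end{cases}
\]
It satisfies (FM1)--(FM6) with $*=\wedge$. The decisive instance of (FM4) is $M(x,y,s)\wedge M(y,z,r)>0\Rightarrow s,r>1\Rightarrow M(x,z,s+r)=1$. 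Moreover, $d_a^M(u,v)=\bigvee\{t\mid M(u,v,t)\le a\}$ returns $d$ for $a<\tfrac12$ and $d'$ for $a\ge\tfrac12$, so $(x,y,z)\in B_{d_{3/4}^M}$ but $(x,y,z)\notin B_{d_{1/4}^M}$.

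Here (FM2) is read as ``$M(u,v,t)=1$ for all $t>0$ iff $u=v$''. This is the reading the paper's own $\chi_d$ forces. Even under the stronger pointwise reading, rescaling both levels by $\varphi(a)=1/(1-a)$ still gives a counterexample.

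Your suspicion about the reverse inclusion is also justified. Take side lengths $(d(x,y),d(y,z),d(x,z))=(1,1,2)$ below level $\tfrac12$ and $(2,2,3)$ from $\tfrac12$ on. Then for $a<\tfrac12\le b$ we get $(x,y,z)\in B_{d_a}$ but $(x,y,z)\notin B_{d_b}$.

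The paper's proof of (2) fails at the step
\[
\bigwedge_{b>a}d_b^M(x,z)\ \geq\ \bigwedge_{b>a}\bigl(d_b^M(x,y)+d_b^M(y,z)\bigr).
\]
The hypothesis $(x,y,z)\in B_{d_b^M}$ gives the inequality only at one fixed $b$, while the infimum needs it at every $b'>a$. What the computation actually proves is $\bigcap_{b>a}B_{d_b^M}\subseteq B_{d_a^M}$. That is a correct substitute for (2), and a better one than a restrictive hypothesis like $d_a=\varphi(a)d$, under which everything trivializes.

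The error is not local. The union formula $B_{d_a^M}=\bigcup_{a\le b}B_{d_b^M}$ stated next fails, and so do the later steps that cite (2). In your example $\mathfrak{B}^{\mathcal{D}^{M}}(x,y,z)=\bigvee[\tfrac12,1)=1$. On the other hand, $\mathfrak{B}^{M}(x,y,z)=0$: for $t\in(1.5,2]$ the inner supremum is $0$, and $\tfrac12\rightarrow 0=0$. This contradicts Theorem \ref{The relationship1 between fuzzy betweenness relations}.
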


\begin{proof}
\noindent (1)
By Theorem \ref{theorem of a nest of metric induced by a KM-fuzzy metric}, we know 
$d_{a}^{M}$ is a metric. 
Then

\noindent (B1) 
$(x, y, z)\in B_{d_{a}^{M}}\Leftrightarrow (z, y, x)\in B_{d_{a}^{M}}$ is obvious.

\noindent (B4) 
$(x, y, y)\in B_{d_{a}^{M}}$ is trivial.

\noindent (B5) 
If $(x, y, z)\in B_{d_{a}^{M}}$ and $(x, z, y)\in B_{d_{a}^{M}}$, then 
$d_{a}^{M}(x, z)=d_{a}^{M}(x, y)+d_{a}^{M}(y, z)$ and $d_{a}^{M}(x, y)=d_{a}^{M}(x, z)+d_{a}^{M}(z, y)$.
So $d_{a}^{M}(z, y)+d_{a}^{M}(y, z)=0$.
This shows $y=z$.

\noindent (B3) 
If $(o, x, y)\in B_{d_{a}^{M}}$ and $(o, y, z)\in B_{d_{a}^{M}}$, then $d_{a}^{M}(o, y)=d_{a}^{M}(o, x)+d_{a}^{M}(x, y)$ and 
$d_{a}^{M}(o, z)=d_{a}^{M}(o, y)+d_{a}^{M}(y, z)$. Suppose that 
$(o, x, z)\notin B_{d_{a}^{M}}$.
Then $d_{a}^{M}(o, z)<d_{a}^{M}(o, x)+d_{a}^{M}(x, z)\leq d_{a}^{M}(o, x)+d_{a}^{M}(x, y)++d_{a}^{M}(y, z)$.
This implies $d_{a}^{M}(o, y)+d_{a}^{M}(y, z)<d_{a}^{M}(o, x)+d_{a}^{M}(x, y)++d_{a}^{M}(y, z)$.
So $d_{a}^{M}(o, y)<d_{a}^{M}(o, x)+d_{a}^{M}(x, y)=d_{a}^{M}(o, y)$, 
which is a contradiction. 
Hence $(o, x, z)\in B_{d_{a}^{M}}$. 
Since (B4) and (B5) are equivalent to (B2),
it follows that $B_{d_{a}^{M}}$ satisfy the conditions 
(B1)-(B3).
Therefore  $B_{d_{a}^{M}}$ is a betweenness relation.

\noindent (2)
For any $b>a$ and
for any $(x, y, z)\in B_{d_{b}^{M}}$, 
we get 
$d_{b}^{M}(x, z)\geq d_{b}^{M}(x, y)+d_{b}^{M}(y, z)$.
Then $$\bigwedge_{b>a}d_{b}^{M}(x ,z)\geq \bigwedge_{b>a} \left(d_{b}^{M}(x, y)+d_{b}^{M}(y, z)\right) \geq   \left(\bigwedge_{b>a} d_{b}^{M}(x, y)\right)+
\left(\bigwedge_{b>a} d_{b}^{M}(y, z)\right).$$
From Theorem \ref{theorem of a nest of metric induced by a KM-fuzzy metric}, we know
$d_{a}^{M}=\bigwedge_{b>a}d_{b}^{M}$.
Hence $d_{a}^{M}(x, z)\geq d_{a}^{M}(x, y)+d_{a}^{M}(y, z)$. This shows $(x, y, z)\in B_{d_{a}^{M}}$. 
If $b=a$, then $B_{d_{b}^{M}}=B_{d_{a}^{M}}$.
Therefore $B_{d_{b}^{M}}\subseteq B_{d_{a}^{M}}$ for any $a\leq b$.

\noindent (3) It can be easily derived from conclusion (2).
\end{proof} 

Based on Theorem \ref{theorem1 of betweenness relations in KM-fuzzy metric spaces}, it is easy to get the following theorem.

\begin{theorem}\label{theorem2 of betweenness relations in KM-fuzzy metric spaces }
Let $(X, M, \wedge)$ be a KM-fuzzy metric space. 
Then $B_{d_{a}^{M}}=\bigcup_{a\leq b}B_{d_{b}^{M}}$ and the family of betweenness relations 
$\mathcal{B}^{M}=\{B_{d_{a}^{M}} : a\in (0,1)\}$ is call a nest of betweenness relations.
\end{theorem}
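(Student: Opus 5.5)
The claim $B_{d_{a}^{M}}=\bigcup_{a\leq b}B_{d_{b}^{M}}$ is a direct consequence of Theorem \ref{theorem1 of betweenness relations in KM-fuzzy metric spaces}(2). I will prove the two inclusions separately. The union is read over $b\in[a,1)$, so that every index stays inside $(0,1)$, where $d_b^M$ is defined.

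For the inclusion $\bigcup_{a\leq b}B_{d_{b}^{M}}\subseteq B_{d_{a}^{M}}$, take any $(x,y,z)$ in the union. Then $(x,y,z)\in B_{d_{b}^{M}}$ for some $b\geq a$. Theorem \ref{theorem1 of betweenness relations in KM-fuzzy metric spaces}(2) gives $B_{d_{b}^{M}}\subseteq B_{d_{a}^{M}}$, so $(x,y,z)\in B_{d_{a}^{M}}$.

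For the reverse inclusion, note that the index $b=a$ occurs in the union. Hence $B_{d_{a}^{M}}$ is one of the members of the union and is trivially contained in it. Combining the two inclusions gives the equality.

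For the ``nest'' part, I will record two facts. By Theorem \ref{theorem1 of betweenness relations in KM-fuzzy metric spaces}(1), each $B_{d_{a}^{M}}$ is a betweenness relation. By part (2) of the same theorem, the family $\mathcal{B}^{M}$ is antitone in $a$. This monotonicity together with the union identity is exactly the nesting property, by analogy with Definition \ref{Definition of LSC} and Remark \ref{remark of nest of metrics}. So the remaining part of the statement is terminology rather than something to prove.

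There is no real obstacle here. The only point needing care is that the equality as stated is trivial because it includes $b=a$. If a stronger version with strict inequality, $B_{d_{a}^{M}}=\bigcup_{b>a}B_{d_{b}^{M}}$, were intended, the inclusion $\supseteq$ would still follow from part (2). The inclusion $\subseteq$, however, would require showing that $d_a^M(x,z)=d_a^M(x,y)+d_a^M(y,z)$ forces the same identity for some $b>a$. That does not follow from $d_a^M=\bigwedge_{b>a}d_b^M$ and is false in general, so I would not attempt it.
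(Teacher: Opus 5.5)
Your reduction is the one the paper intends; it gives no argument beyond ``based on Theorem~\ref{theorem1 of betweenness relations in KM-fuzzy metric spaces}''. The deduction itself is sound: $\supseteq$ holds because $b=a$ is an index, and $\subseteq$ is equivalent to $B_{d_b^M}\subseteq B_{d_a^M}$ for all $b\ge a$. The gap is that this cited fact, part (2) of Theorem~\ref{theorem1 of betweenness relations in KM-fuzzy metric spaces}, is false. So your step ``part (2) gives $B_{d_b^M}\subseteq B_{d_a^M}$'' fails, and the stated identity fails with it.

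Here is a counterexample. Take $X=\{x,y,z\}$, $*=\wedge$, $M(p,p,0)=0$, $M(p,p,t)=1$ for $t>0$, and
\[
M(x,y,t)=M(y,z,t)=\begin{cases}0, & t\le 1,\\ 1, & t>1,\end{cases}
\qquad
M(x,z,t)=\begin{cases}0, & t\le 1,\\ \tfrac12, & 1<t\le 2,\\ 1, & t>2,\end{cases}
\]
extended symmetrically. This is a KM-fuzzy metric:
\begin{itemize}
\item every section $M(p,q,-)$ is nondecreasing and left-continuous with limit $1$;
\item (FM2) holds, since $M(p,q,1)=0$ for $p\neq q$;
\item for distinct $p,q,r$, the value $M(p,q,s)\wedge M(q,r,u)$ is nonzero only if $s,u>1$, whence $s+u>2$ and $M(p,r,s+u)=1$;
\item triples with a repeated point satisfy (FM4) by monotonicity in $t$.
\end{itemize}
Directly from the definition, $d_a^M(x,y)=d_a^M(y,z)=1$ for every $a$. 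Meanwhile $d_a^M(x,z)=1$ for $a<\tfrac12$ and $d_a^M(x,z)=2$ for $a\ge\tfrac12$. Hence $(x,y,z)\in B_{d_{1/2}^M}\subseteq\bigcup_{b\ge 1/4}B_{d_b^M}$, but $(x,y,z)\notin B_{d_{1/4}^M}$.

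The paper's proof of part (2) breaks where it applies $\bigwedge_{b>a}$ to $d_b^M(x,z)\ge d_b^M(x,y)+d_b^M(y,z)$. That step requires the inequality for \emph{every} $b>a$, whereas only a single $b$ is given. What the computation actually establishes is $\bigcap_{b>a}B_{d_b^M}\subseteq B_{d_a^M}$.

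This is the same phenomenon you correctly diagnosed for the strict union. Equality in the triangle inequality at one level does not transfer to another level, because $d(x,y)$, $d(y,z)$ and $d(x,z)$ can change by different amounts between levels. Your observation (which is right) shows $\{B_{d_a^M}\}$ need not increase in $a$; the example above shows it need not decrease either. For the same reason, your side claim that $\supseteq$ ``would still follow from part (2)'' in the strict version is also false: take $a=\tfrac14$, $b=\tfrac12$ in the example.

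So the statement cannot be proved as written. What survives is the intersection inclusion $\bigcap_{b>a}B_{d_b^M}\subseteq B_{d_a^M}$, and any ``nest'' terminology would have to rest on that rather than on monotonicity.
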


At the end of Section 3,
we obtain the following summary diagram (see Figure \ref{summary of betweenness relations in KM-fuzzy metric spaces}).

\begin{figure}[htbp]
\centering  
\includegraphics[height=2cm, width=16cm]
{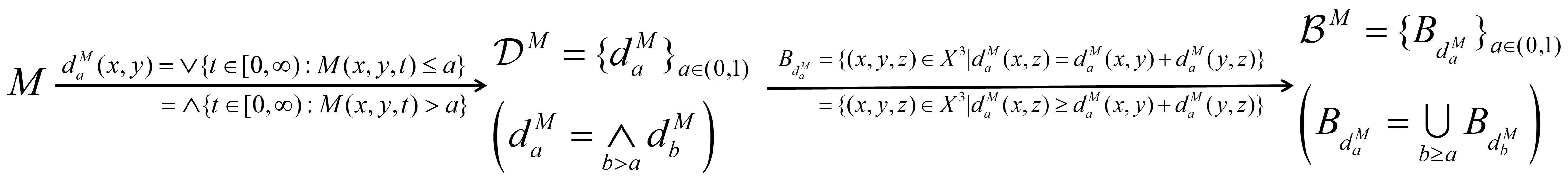}
\caption{Constructions and Characterizations of betweenness relations in KM-fuzzy metric spaces.} 
\label{summary of betweenness relations in KM-fuzzy metric spaces}
\end{figure}

\section{\bf Fuzzy betweenness relations in KM-fuzzy metric spaces}

In this section, we will present two methods for constructing fuzzy betweenness relations in KM-fuzzy metric spaces. One of them is achieved by using the implication operator, where the fuzzy betweenness relation is directly induced by a KM-fuzzy metric. Another approach is to induce an another fuzzy betweenness relation through its corresponding nest of metrics.
Additionally, we will demonstrate that the two induced fuzzy  betweenness relations are equal.

In \cite{Zhang2020-2} 
Zhang et al. introduced the concepts of 
$*$-betweenness relations and strong $*$-betweenness relation.
Subsequently, Shi et al.  \cite{Shiyi-2023} gave a new definition of a fuzzy betweenness relation to establish a bijection between fuzzy geometric interval operators and fuzzy betweenness relations.
All of them can be viewed as the fuzzy generalizations of classical betweenness relations. 

\begin{definition}[\cite{Zhang2020-2}]
\label{the definition of a *-betweenness relation}
Let $*$ be a $t$-norm on $[0,1]$.
A fuzzy ternary relation $B: X^{3}\rightarrow [0, 1]$ is called a \textit{$*$-betweenness relation}, 
if it satisfies the following conditions:
\vskip1mm
\noindent 
(FB1) Symmetry: $B(x, y, z)=B(z, y, x)$
for all $x, y, z\in X$;

\noindent 
(FB2) Reflexivity: $B(x, y, y)=1$
for all $x, y\in X$;

\noindent 
(FB3) Crisp antisymmetry: 
$B(x, y, z)=B(x, z, y)=1$ implies $y=z$
for all $x, y, z\in X$;

\noindent 
(FB4) $*$-transitivity: 
$B(o, x, y)*B(o, y, z)\leq B(o, x, z)$ 
for all $o, x, y, z\in X$;

\noindent
The pari $(X, B)$ is called a \textit{$*$-betweenness set} and the value $B(x, y, z)$ is interpreted as the degree to which $y$ is in between $x$ and $z$.
\noindent
\end{definition}

\begin{definition}[\cite{Zhang2020-2}]
\label{the definition of a *-betweenness relation}
Let $*$ be a $t$-norm on $[0,1]$.
A fuzzy ternary relation $B: X^{3}\rightarrow [0, 1]$ is called a \textit{strong $*$-betweenness relation}, 
if it satisfies the following conditions:
\vskip1mm
\noindent 
(SFB1) Symmetry: $B(x, y, z)=B(z, y, x)$
for all $x, y, z\in X$;

\noindent 
(SFB2) Reflexivity: $B(x, y, y)=1$
for all $x, y\in X$;

\noindent 
(SFB3) $*$-antisymmetry: 
$B(x, y, z)*B(x, z, y)\neq 0$ implies $y=z$
for all $x, y, z\in X$;

\noindent 
(SFB4) $*$-transitivity: 
$B(o, x, y)*B(o, y, z)\leq B(o, x, z)$ 
for all $o, x, y, z\in X$;

\noindent
The pari $(X, B)$ is called a \textit{strong $*$-betweenness set}. 
\end{definition}



\begin{definition}[\cite{Shiyi-2023}]
\label{the definition of a betweenness relation}
A fuzzy ternary relation $B: X^{3}\rightarrow [0, 1]$ is called a \textit{fuzzy betweenness relation}, 
if it satisfies the following conditions:
\vskip1mm
\noindent 
(FBR1) Symmetry: $B(x, y, z)=B(z, y, x)$
for all $x, y, z\in X$;

\noindent 
(FBR2) Reflexivity: $B(x, y, y)=1$
for all $x, y\in X$;

\noindent 
(FBR3) $*$-antisymmetry: $B(x, y, z)*B(x, z, y)\neq 0$ implies $y=z$
for all $x, y, z\in X$;

\noindent 
(FBR4) $*_{1}$-transitivity: $B(o, x, y)*B(o, y, z)\leq B(o, x, z)$ 
for all $o, x, y, z\in X$;

\noindent 
(FBR5)  $*_{2}$-transitivity: $B(o, x, y)*B(o, y, z)\leq B(x, y, z)$ 
for all $o, x, y, z\in X$;

\noindent
The pari $(X, B)$ is called a \textit{fuzzy betweenness set}.
\end{definition}

\begin{remark}
\noindent (1)
(FBR4) is exactly the (FB4) and (SFB4). 
And $*_{1}$-transitivity and $*_{2}$-transitivity are the fuzzy generalizations of (P2) and (P3), respectively.

\noindent (2) 
The differences of these three types of fuzzy betweenness relations lie in the description of antisymmetry. 
(FBR3) is (SFB3), which can be regarded as a generalization of (FB3).
In fact, all these three definitions can, to some extent, be referred to as fuzzy betweenness relations.
\end{remark}

\subsection{A fuzzy betweenness relation $\mathfrak{B}^{M}$ induced by a KM-fuzzy metric $M$}

In this part, 
we shall show that a fuzzy betweenness relation $\mathfrak{B}^{M}$ can be induced by a KM-fuzzy metric $M$ through the implication operator. 
For any $a, b\in [0,1]$, 
define the implication operator $\rightarrow: [0, 1]\times [0,1] \rightarrow [0,1]$ by
$a\rightarrow b =\bigvee \{c \in [0, 1]\mid a*c\leq b\}$.
Then $a*c\leq b  \Leftrightarrow c\leq a \rightarrow b $.

Although the same construction method of $\mathfrak{B}^{M}$ has been presented in \cite{Jin2025}, this paper reaches more conclusions and its proof method is even more simplified.
Also, we will subsequently introduce another method for constructing an another fuzzy betweenness relation in the subsection 4.2 
and discuss the connections between the two construction methods.

\begin{theorem}
\label{a fuzzy betweenness relation1 induced by a KM-fuzzy metric}
Let $(X, M, \wedge)$ be a KM-fuzzy metric space. Define a fuzzy ternary relation $\mathfrak{B}^{M}: X^{3}\rightarrow [0, 1]$ by
$$
\mathfrak{B}^{M}(x, y, z)
=\bigwedge_{t>0}\left(M(x, z, t)\rightarrow
\left(\bigvee_{s+r=t}M(x, y, s)\wedge M(y,z,r)\right)\right).
$$
Then 

\noindent {\rm (1)}
For any $a\in (0,1)$, 
$$a\leq \mathfrak{B}^{M}(x, y, z)
\Leftrightarrow \forall t>0, a\wedge M(x, z, t)\leq \bigvee_{s+r=t}M(x, y, s)\wedge M(y,z,r).$$

\noindent {\rm (2)}
$\mathfrak{B}^{M}$ is a fuzzy betweenness relation. 

\end{theorem}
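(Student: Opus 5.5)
The plan is to exploit that, for $*=\wedge$, the residual implication is the Gödel implication: $u\rightarrow v=1$ if $u\leq v$ and $u\rightarrow v=v$ otherwise. Part (1) then follows directly from the adjunction $a\wedge c\leq b\Leftrightarrow c\leq a\rightarrow b$ stated just before the theorem. Writing $S_t(x,y,z)=\bigvee_{s+r=t}M(x,y,s)\wedge M(y,z,r)$, we have
$a\leq \mathfrak{B}^{M}(x,y,z)$ iff $a\leq M(x,z,t)\rightarrow S_t$ for every $t>0$, iff $a\wedge M(x,z,t)\leq S_t$ for every $t>0$.

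For (2), the key step is a bridge lemma to the nest of metrics of Theorem \ref{theorem of a nest of metric induced by a KM-fuzzy metric}. For $a\in(0,1]$,
$$a\leq \mathfrak{B}^{M}(x,y,z)\iff (x,y,z)\in B_{d_{c}^{M}}\ \text{for all } c\in(0,a).$$

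For the forward direction, fix $c<a$ and any $t>d_c^M(x,z)$. Theorem \ref{theorem of a nest of metric induced by a KM-fuzzy metric}(1) gives $M(x,z,t)>c$, so $a\wedge M(x,z,t)>c$. By (1) (or the defining infimum when $a=1$), $S_t>c$. Hence there are $s+r=t$ with $M(x,y,s)>c$ and $M(y,z,r)>c$, that is, $d_c^M(x,y)<s$ and $d_c^M(y,z)<r$. Therefore $d_c^M(x,y)+d_c^M(y,z)<t$, and letting $t\downarrow d_c^M(x,z)$ gives $(x,y,z)\in B_{d_c^M}$.

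For the converse, fix $t>0$ and any $c<a\wedge M(x,z,t)$. Then $d_c^M(x,z)<t$, so $d_c^M(x,y)+d_c^M(y,z)<t$. We can then split $t=s+r$ with $s>d_c^M(x,y)$ and $r>d_c^M(y,z)$, which yields $M(x,y,s)\wedge M(y,z,r)>c$. Thus $S_t\geq a\wedge M(x,z,t)$.

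With the lemma in hand, the axioms of Definition \ref{the definition of a betweenness relation} (with $*=\wedge$) are checked as follows.

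Symmetry (FBR1) is immediate from (FM3) and swapping $s,r$ in $S_t$.

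Reflexivity (FBR2) is direct. Since $M(y,y,r)=1$ for $r>0$, we get $S_t(x,y,y)=\bigvee_{s<t}M(x,y,s)=M(x,y,t)$ by (FM5$^{\dagger}$), so every implication in the defining infimum equals $1$. Alternatively, use the lemma with $a=1$ and the reflexivity of each $B_{d_c^M}$.

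For antisymmetry (FBR3), suppose $\mathfrak{B}^{M}(x,y,z)\wedge\mathfrak{B}^{M}(x,z,y)>0$. Pick $a$ with $0<a<$ this minimum, and take some $c<a$. By the lemma, $(x,y,z)$ and $(x,z,y)$ both lie in $B_{d_c^M}$. Closure of this betweenness relation (Theorem \ref{theorem1 of betweenness relations in KM-fuzzy metric spaces}(1)) then gives $y=z$.

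For the two transitivities (FBR4) and (FBR5), let $a=\mathfrak{B}^{M}(o,x,y)\wedge \mathfrak{B}^{M}(o,y,z)$; if $a=0$ there is nothing to prove. For every $c<a$, the lemma puts $(o,x,y)$ and $(o,y,z)$ in $B_{d_c^M}$. End-point transitivity (B3) of $B_{d_c^M}$ gives $(o,x,z)\in B_{d_c^M}$. For (FBR5), the metric version of (P2) gives $(x,y,z)\in B_{d_c^M}$: from $d(o,z)=d(o,x)+d(x,y)+d(y,z)\leq d(o,x)+d(x,z)$ we obtain $d(x,y)+d(y,z)\leq d(x,z)$. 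Applying the lemma backwards gives $a\leq \mathfrak{B}^{M}(o,x,z)$ and $a\leq\mathfrak{B}^{M}(x,y,z)$.

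I expect the main obstacle to be the bridge lemma, specifically the careful strict/non-strict bookkeeping. Whenever the degree is pushed below $a$, we must pass to levels $c<a$ and not $c=a$, and we must use the sharp equivalence $M(x,y,t)>c\Leftrightarrow d_c^M(x,y)<t$ from Theorem \ref{theorem of a nest of metric induced by a KM-fuzzy metric}(1). The edge case $a=1$, which is not covered by (1) as stated, is handled directly from the infimum definition.
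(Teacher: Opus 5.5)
Your proposal is correct. For (FBR3)--(FBR5), however, it follows a different route from the paper's proof of this theorem.

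The paper works directly with $M$. It proves (FBR4) by substituting the inequality from (1) for $\mathfrak{B}^{M}(o,x,y)$ into the one for $\mathfrak{B}^{M}(o,y,z)$, then collapsing the double supremum with (FM4). It proves (FBR3) by a similar chain followed by a contradiction through $d_a^M$. It dismisses (FBR5) as ``similar''.

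You instead first establish the level-cut bridge $a\leq\mathfrak{B}^{M}(x,y,z)\Leftrightarrow (x,y,z)\in B_{d_c^M}$ for all $c<a$. In the paper this only appears later, as the combination of Theorem \ref{characterization1 of a fuzzy betweenness relation induced by a KM-fuzzy metric} and Theorem \ref{The relationship1 between fuzzy betweenness relations}. You then pull back the crisp axioms (B2) and (B3), together with the metric argument $d(x,y)+d(y,z)\leq d(x,z)$. This is exactly what the paper does for $\mathfrak{B}^{\mathcal{D}^{M}}$ in Theorem \ref{a fuzzy betweenness relation2 induced by a KM-fuzzy metric}, transported along the identification $\mathfrak{B}^{M}=\mathfrak{B}^{\mathcal{D}^{M}}$. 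Your proof of the bridge is also more careful than the paper's: you choose an explicit split $s>d_c^M(x,y)$, $r>d_c^M(y,z)$, where the paper's argument speaks of arbitrary splits.

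What the paper's route buys is independence from the nest of metrics. Its (FBR4) computation, run with two separate levels $a\leq\mathfrak{B}^{M}(o,x,y)$ and $b\leq\mathfrak{B}^{M}(o,y,z)$, goes through for any left-continuous $t$-norm. Your route needs $*=\wedge$ so that each $d_c^M$ is a metric.

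What your route buys is a uniform and rigorous treatment of antisymmetry and (FBR5). The paper's (FBR3) chain ends with the strict inequality $M(x,z,t)\wedge M(y,z,t)<M(x,z,t)$. That would require $M(y,z,t)<M(x,z,t)$, which is not justified, and (FBR5) is never actually proved there. Your appeal to the closure of $B_{d_c^M}$ and to the metric (P2)-argument settles both cleanly.
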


\begin{proof}
\noindent (1)
It is easily to seen from the property $a*c\leq b  \Leftrightarrow c\leq a \rightarrow b $ of the implication operator. 

\noindent (2) 
We need to prove that $\mathfrak{B}^{M}$ satisfies (FBR1)-(FBR5).
For any $x, y, z\in X$,

\noindent (FBR1)  $\mathfrak{B}^{M}(x, y, z)=\mathfrak{B}^{M}(z, y, x)$ is obvious.

\noindent (FBR2) 
By (FM2) and (FM5$^{\dagger}$),
we get
\begin{eqnarray*}
&&\mathfrak{B}^{M}(x, y, y)
=\bigwedge_{t>0}\left(M(x, y, t)\rightarrow
\left(\bigvee_{s+r=t}M(x, y, s)\wedge M(y, y, r)\right)\right)\\
&=&\bigwedge_{t>0}\left(M(x, y, t)\rightarrow
\left(\bigvee_{s+r=t}M(x, y, s)\right)\right)
=\bigwedge_{t>0}\left(M(x, y, t)\rightarrow
M(x, y, t)\right)=1
\end{eqnarray*}

\noindent (FBR3) 
Suppose that $\mathfrak{B}^{M}(x, y, z)\wedge \mathfrak{B}^{M}(x, z, y)=a\neq 0$.
Then $a\leq \mathfrak{B}^{M}(x, y, z)$ and
$a\leq \mathfrak{B}^{M}(x, z, y)$.
By (1), we get
$a\wedge M(x, z, t)\leq \bigvee_{t_{1}+t_{2}=t}
M(x, y, t_{1})\wedge M(y, z, t_{2})$ for any $t>0$
and $a\wedge M(x, y, r)\leq \bigvee_{r_{1}+r_{2}=r}
M(x, z, r_{1})\wedge M(z, y, r_{2})$ for any $r>0$.
If we assume $y\neq z$, then from (FM2) that
$M(y, z, t)<1$ and $M(z, y, t)<1$ for any $t>0$.
Further, 
\begin{eqnarray*}
&&a*M(x, z, t)
\leq \bigvee_{t_{1}+t_{2}=t}
\left(\left(\bigvee_{r_{1}+r_{2}=t_{1}}
M(x, z, r_{1})\wedge M(z, y, r_{2})\right)
\wedge M(y, z, t_{2})\right)\\
&\leq&\bigvee_{t_{1}+t_{2}=t}
M(x, z, t_{1})\wedge M(y, z, t_{2})
\leq M(x, z, t)\wedge M(y, z, t)
<M(x, z, t).
\end{eqnarray*}
Then $a\wedge M(x, z, t)<M(x, z, t)$ for any $t>0$.
Similarly, we can obtain
$a\wedge M(x, y, r)<M(x, z, r)$ for any $r>0$.
This implies $a<M(x, z, t)$  for any $t>0$
and $a<M(x, y, r)$  for any $r>0$.
From Theorem \ref{theorem of a nest of metric induced by a KM-fuzzy metric}, we get
$d_{a}^{M}(x, z)<t$ for any $t>0$
and $d_{a}^{M}(x, y)<r$  for any $r>0$.
Then $d_{a}^{M}(x, z)=0$
and $d_{a}^{M}(x, y)=0$.
Since $d_{a}^{M}$ is a metric, it follows that
$x=z$ and $x=y$, a contradiction. 
So $y=z$.

\noindent (FBR4) We need to prove 
$\mathfrak{B}^{M}(o, x, y)\wedge 
\mathfrak{B}^{M}(o, y, z)\leq 
\mathfrak{B}^{M}(o, x, z)$.
Take any $a\in (0, 1)$ with $\leq \mathfrak{B}^{M}(o, x, y)\wedge 
\mathfrak{B}^{M}(o, y, z)$.
Then $a\leq \mathfrak{B}^{M}(o, x, y)$ and
$a\leq \mathfrak{B}^{M}(o, y, z)$. 
By (1), we get
$a\wedge M(o, y, t)\leq \bigvee_{t_{1}+t_{2}=t}
M(o, x, t_{1})\wedge M(x, y, t_{2})$ for any $t>0$
and 
$a\wedge M(o, z, r)\leq \bigvee_{r_{1}+r_{2}=r}
M(o, y, r_{1})\wedge M(y, z, r_{2})$ for any $r>0$.
So 
\begin{eqnarray*}
a\wedge M(o, z, r)
&\leq& \bigvee_{r_{1}+r_{2}=r}
\left(\left(\bigvee_{t_{1}+t_{2}=r_{1}}
M(o, x, t_{1})\wedge M(x, y, t_{2})\right)
\wedge M(y, z, r_{2})\right)\\
&=&\bigvee_{r_{1}+r_{2}=r}~\bigvee_{ t_{1}+t_{2}=r_{1}}
M(o, x, t_{1})\wedge M(x, y, t_{2})
\wedge M(y, z, r_{2})\\
&\leq&\bigvee_{t_{1}+t_{2}+r_{2}=r}
M(o, x, t_{1})\wedge M(x, z, t_{2}+r_{2})
=\mathfrak{B}^{M}(o, x, z).
\end{eqnarray*}
From  (1), we get
$a\leq \mathfrak{B}^{M}(o, x, z)$.
By the arbitrariness of $a$, 
$\mathfrak{B}^{M}(o, x, y)\wedge 
\mathfrak{B}^{M}(o, y, z)\leq 
\mathfrak{B}^{M}(o, x, z)$.

\noindent (FBR5) 
The proof is similar to that of (FBR4) and omitted here.

\end{proof}

\begin{remark}
(1) The constrcution of the fuzzy betweenness relation $\mathfrak{B}^{M}$ can not be the following
$
\mathfrak{B}^{M}(x, y, z)
=\bigwedge_{t>0}\left(M(x, z, t)\rightarrow
\left(M(x, y, s)\wedge M(y,z,r)\right)\right).
$
Because it fails to meet condition (FB2).

\noindent (2)
In subsection 4.3, we will show that $\mathfrak{B}^{M}$ not only satisfies the transitivity (FBR4) and (FBR5), but also satisfies all other four-point and five-point types of transitivity properties.
Meanwhile, there has an another method for proving (FBR4), which can be found in Theorem \ref{The relationship3 between fuzzy betweenness relations}.

\noindent (3) 
The $t$-norm is chosen to be the minimum one, because it was used in the proof of condition (FBR3) and (FBR4). 
Also, this leads us to conclude that 
$\mathfrak{B}^{M}$ is an fuzzy betweenness relation that satisfies more transitivity properties. 
\end{remark}


\begin{example}
Let $(X, d)$ be a metric space and
let $M_{d}$ be the standard KM-fuzzy metric 
under any  $t$-norm.
Then the fuzzy betweeenness relation 
$\mathfrak{B}^{M_{d}}: X^{3}\rightarrow [0, 1]$ 
is that 
$$\mathfrak{B}^{M_{d}}(x, y, z)
=\bigwedge_{t>0}\left(
\frac{t}{t+d(x, z)}
\rightarrow
\left(\bigvee_{s+r=t}\frac{s}{s+d(x, y)}\wedge \frac{r}{r+d(y, z)}\right)\right).$$
\end{example}


\subsection{A fuzzy betweenness relation $\mathfrak{B}^{\mathcal{D}^{M}}$ induced by its corresponding nest of metrics $\mathcal{D}^{M}$}

In this part, we shall give an another method to construct a fuzzy betweenness relation.
Since there is a one-to-one correspondence between a KM-fuzzy metric and its nests of metrics (under the $t$-norm $*=\wedge$, or more general case that $*$ is strictly monotone), we shall get the following important construction from another perspective.

\begin{theorem}
\label{a fuzzy betweenness relation2 induced by a KM-fuzzy metric}
Let $(X, M, \wedge)$ be a KM-fuzzy metric space and let
$\mathcal{D}^{M}=\{d_{a}^{M}\}_{a\in (0, 1)}$
be the corresponding nest of metrics.
Define a fuzzy ternary relation $\mathfrak{B}^{\mathcal{D}^{M}}: X^{3}\rightarrow [0, 1]$ by
 \begin{eqnarray*}
\mathfrak{B}^{\mathcal{D}^{M}}(x, y, z)
&=&\bigvee\left\{a\in (0, 1) \mid
d_{a}^{M}(x, z)=d_{a}^{M}(x, y)+d_{a}^{M}(y, z)\right\}\\
&=&\bigvee\left\{a\in (0, 1) \mid
d_{a}^{M}(x, z)\geq d_{a}^{M}(x, y)+d_{a}^{M}(y, z)\right\}.
\end{eqnarray*}
Then 
$\mathfrak{B}^{\mathcal{D}^{M}}$ is a fuzzy betweenness relation.

%
\end{theorem}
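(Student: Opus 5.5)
The plan is to reduce every axiom of a fuzzy betweenness relation to the corresponding crisp property of the betweenness relations $B_{d_{a}^{M}}$, one level $a$ at a time. The tool that makes this possible is a level-set characterization of $\mathfrak{B}^{\mathcal{D}^{M}}$. For fixed $x,y,z$, set
$$A(x,y,z)=\{a\in(0,1)\mid (x,y,z)\in B_{d_{a}^{M}}\}.$$
By Theorem \ref{theorem1 of betweenness relations in KM-fuzzy metric spaces}(2), $B_{d_{b}^{M}}\subseteq B_{d_{a}^{M}}$ whenever $a\leq b$, so $A(x,y,z)$ is a down-set of $(0,1)$. I will first prove the following claim: for every $a\in(0,1)$,
$$a<\mathfrak{B}^{\mathcal{D}^{M}}(x,y,z)\ \Rightarrow\ (x,y,z)\in B_{d_{a}^{M}}\ \Rightarrow\ a\leq \mathfrak{B}^{\mathcal{D}^{M}}(x,y,z).$$
The second implication is just the definition of a supremum. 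For the first, if $a<\bigvee A(x,y,z)$, then some $b\in A(x,y,z)$ satisfies $b>a$, and the down-set property gives $a\in A(x,y,z)$. This claim is the only point needing care, namely that the supremum may fail to be attained, and it is the step I regard as the main (mild) obstacle. Everything afterwards is bookkeeping.

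With the claim in hand, I check (FBR1)--(FBR5) for $*=\wedge$.
\begin{itemize}
\item (FBR1) The defining condition $d_{a}^{M}(x,z)=d_{a}^{M}(x,y)+d_{a}^{M}(y,z)$ is invariant under swapping $x$ and $z$, by the symmetry of each metric $d_{a}^{M}$.
\item (FBR2) Since $d_{a}^{M}(y,y)=0$, the triple $(x,y,y)$ belongs to $B_{d_{a}^{M}}$ for every $a\in(0,1)$. Hence $A(x,y,y)=(0,1)$ and $\mathfrak{B}^{\mathcal{D}^{M}}(x,y,y)=1$.
\item (FBR3) Suppose $c=\mathfrak{B}^{\mathcal{D}^{M}}(x,y,z)\wedge\mathfrak{B}^{\mathcal{D}^{M}}(x,z,y)>0$, and choose any $a\in(0,c)$. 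By the claim, both $(x,y,z)$ and $(x,z,y)$ lie in $B_{d_{a}^{M}}$. The antisymmetry (B5) of $B_{d_{a}^{M}}$ from Theorem \ref{theorem1 of betweenness relations in KM-fuzzy metric spaces}(1) then gives $y=z$.
\end{itemize}

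For the two transitivities, let $c=\mathfrak{B}^{\mathcal{D}^{M}}(o,x,y)\wedge\mathfrak{B}^{\mathcal{D}^{M}}(o,y,z)$; if $c=0$ there is nothing to prove. For each $a\in(0,c)$, the claim gives $(o,x,y),(o,y,z)\in B_{d_{a}^{M}}$.
\begin{itemize}
\item (FBR4) End-point transitivity (B3) of $B_{d_{a}^{M}}$ gives $(o,x,z)\in B_{d_{a}^{M}}$, so $a\leq\mathfrak{B}^{\mathcal{D}^{M}}(o,x,z)$ by the claim. Letting $a\uparrow c$ yields $c\leq\mathfrak{B}^{\mathcal{D}^{M}}(o,x,z)$.
\item (FBR5) Here I need the crisp (P2)-type fact for a metric $d=d_{a}^{M}$: from $d(o,y)=d(o,x)+d(x,y)$ and $d(o,z)=d(o,y)+d(y,z)$, conclude $(x,y,z)\in B_{d}$. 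The computation is
$$d(o,z)=d(o,x)+d(x,y)+d(y,z)\geq d(o,x)+d(x,z)\geq d(o,z),$$
using the triangle inequality twice. Equality must therefore hold throughout, which forces $d(x,z)=d(x,y)+d(y,z)$. As in (FBR4), this gives $a\leq\mathfrak{B}^{\mathcal{D}^{M}}(x,y,z)$ for all $a<c$, hence $c\leq\mathfrak{B}^{\mathcal{D}^{M}}(x,y,z)$.
\end{itemize}

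Finally, the equality of the two displayed formulas for $\mathfrak{B}^{\mathcal{D}^{M}}$ in the statement follows from the triangle inequality for $d_{a}^{M}$. This is exactly as in Example \ref{example of betweenness relation of metric} and Theorem \ref{theorem1 of betweenness relations in KM-fuzzy metric spaces}, so I will not repeat it.
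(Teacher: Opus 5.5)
Your proof breaks at the down-set claim. You derive it from Theorem \ref{theorem1 of betweenness relations in KM-fuzzy metric spaces}(2), the inclusion $B_{d_b^M}\subseteq B_{d_a^M}$ for $a\le b$, and that inclusion is false. So is the statement you are proving. The paper's own proof makes the same move: it passes from some level $a_0>\alpha$ down to $\alpha$ by citing that theorem. So following the paper does not rescue the argument.

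The paper's proof of Theorem \ref{theorem1 of betweenness relations in KM-fuzzy metric spaces}(2) fixes one $b>a$ with $(x,y,z)\in B_{d_b^M}$ and then takes $\bigwedge_{b>a}$, which would need the inequality at every $b>a$. Right-continuity of the nest only gives $\bigcap_{a<b<a+\varepsilon}B_{d_b^M}\subseteq B_{d_a^M}$. Increasing distances do not make metric betweenness monotone in $a$.

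\textbf{Three-point counterexample.} Let $X=\{x,y,z\}$. Let $d'$ and $d''$ be the metrics with $(d'(x,y),d'(y,z),d'(x,z))=(2,1,1)$ and $(d''(x,y),d''(y,z),d''(x,z))=(2,1,3)$. Define $M(p,q,t)$ to be $0$ for $t\le d'(p,q)$, $\tfrac12$ for $d'(p,q)<t\le d''(p,q)$, and $1$ for $t>d''(p,q)$.
\begin{itemize}
\item For $0\le c<\tfrac12$ the set $\{t\mid M(p,q,t)>c\}$ is $(d'(p,q),\infty)$, and for $\tfrac12\le c<1$ it is $(d''(p,q),\infty)$. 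Hence (FM1)--(FM6) with $\wedge$ reduce to the metric axioms for $d'\le d''$. This is just Theorem \ref{theorem of a KM-fuzzy metric induced by a nest of metrics} applied to the nest $d_a=d'$ for $a<\tfrac12$, $d_a=d''$ for $a\ge\tfrac12$.
\item Directly from the definition, $d_a^M=d'$ for $a<\tfrac12$ and $d_a^M=d''$ for $a\ge\tfrac12$.
\item Therefore $(x,y,z)\in B_{d_a^M}$ iff $a\ge\tfrac12$, while $(x,z,y)\in B_{d_a^M}$ iff $a<\tfrac12$.
\end{itemize}
So $B_{d^M_{3/4}}\not\subseteq B_{d^M_{1/4}}$, and your claim fails at $a=\tfrac14<\mathfrak{B}^{\mathcal{D}^M}(x,y,z)$. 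Moreover
\[
\mathfrak{B}^{\mathcal{D}^M}(x,y,z)\wedge\mathfrak{B}^{\mathcal{D}^M}(x,z,y)=1\wedge\tfrac12\neq 0\quad\text{although } y\neq z,
\]
which violates (FBR3). If you read (FM2) as requiring $M(p,q,t)<1$ for every $t$ when $p\neq q$, use $d_a=\frac{1}{2(1-a)}d''$ for $a\ge\tfrac12$ instead; betweenness is scale invariant, so nothing changes.

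\textbf{Four-point counterexample.} The same two-level construction on $\{o,x,y,z\}$ takes $d'(o,y)=d'(y,z)=\tfrac12$, $d''(o,y)=2$, and all other distances $1$. It gives $\mathfrak{B}^{\mathcal{D}^M}(o,x,y)=1$, $\mathfrak{B}^{\mathcal{D}^M}(o,y,z)=\tfrac12$, and $\mathfrak{B}^{\mathcal{D}^M}(o,x,z)=\mathfrak{B}^{\mathcal{D}^M}(x,y,z)=0$. So (FBR4) and (FBR5) fail as well. Only (FBR1), (FBR2) and the equality of the two displayed formulas survive.

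\textbf{What your argument does prove.} Your level-wise bookkeeping is sound once the level sets are down-sets by construction. Take
\[
\widetilde{\mathfrak{B}}(x,y,z)=\bigvee\{a\in(0,1)\mid (x,y,z)\in B_{d_b^M}\text{ for all }b<a\},
\]
and replace $B_{d_a^M}$ by $\bigcap_{b<a}B_{d_b^M}$ throughout. Symmetry, reflexivity, (B5), (B3), and the (P2)-type implication you checked via the triangle inequality are all preserved under such intersections. Your arguments for (FBR1)--(FBR5) then go through verbatim. By Theorem \ref{a fuzzy betweenness relation1 induced by a KM-fuzzy metric}(1) and part (1) of the proof of Theorem \ref{The relationship1 between fuzzy betweenness relations}, neither of which uses the false inclusion, $\widetilde{\mathfrak{B}}$ is exactly $\mathfrak{B}^M$. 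In the three-point example, $\mathfrak{B}^M(x,y,z)=0\neq\mathfrak{B}^{\mathcal{D}^M}(x,y,z)$.
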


\begin{proof}
\noindent We need to check that $\mathfrak{B}^{\mathcal{D}^{M}}$ satisfies 
(FBR1)-(FBR5). For any $x, y, z\in X$,

\noindent (FBR1) $\mathfrak{B}^{\mathcal{D}^{M}}(x, y, z)=\mathfrak{B}^{\mathcal{D}^{M}}(z, y, x)$, obviously.

\noindent (FBR2) $\mathfrak{B}^{\mathcal{D}^{M}}(x, y, y)=\bigvee_{a\in (0,1)}a=1$.

\noindent (FBR3) 
Suppose that $\mathfrak{B}^{\mathcal{D}^{M}}(x, y, z)\wedge \mathfrak{B}^{\mathcal{D}^{M}}(x, z, y)=r\neq 0$.
Take any $\alpha<r$,
then $r\leq \mathfrak{B}^{\mathcal{D}^{M}}(x, y, z)$ and $r\leq \mathfrak{B}^{\mathcal{D}^{M}}(x, y, z)$. 
So there exist $a_{0}\in (0,1)$
and $b_{0}\in (0,1)$ with 
$d_{a_{0}}^{M}(x, z)=d_{a_{0}}^{M}(x, y)+d_{a_{0}}^{M}(y, z)$ and
$d_{b_{0}}^{M}(x, y)=d_{b_{0}}^{M}(x, z)+d_{b_{0}}^{M}(z, y)$ such that $\alpha<a_{0}$ and $\alpha<b_{0}$.
From Theorem \ref{theorem1 of betweenness relations in KM-fuzzy metric spaces}, we get 
$d_{\alpha}^{M}(x, z)=d_{\alpha}^{M}(x, y)+d_{\alpha}^{M}(y, z)$ and $d_{\alpha}^{M}(x, y)=d_{\alpha}^{M}(x, z)+d_{\alpha}^{M}(z, y)$.
Then $d_{\alpha}^{M}(y, z)+d_{\alpha}^{M}(z, y)=0$. This shows $d_{\alpha}^{M}(y, z)=0$, i.e.,  $y=z$.

\noindent (FBR4) and (FBR5)
Take any $\alpha <\mathfrak{B}^{\mathcal{D}^{M}}(o, x, y)\wedge \mathfrak{B}^{\mathcal{D}^{M}}(o, y, z)$.
Then 
$\alpha <\mathfrak{B}^{\mathcal{D}^{M}}(o, x, y)$ and 
$\alpha <\mathfrak{B}^{\mathcal{D}^{M}}(o, y, z)$, 
So there exist  $a_{0}\in (0,1)$ and $b_{0}\in (0,1)$ with 
$d_{a_{0}}^{M}(o, y)=d_{a_{0}}^{M}(o, x)+d_{a_{0}}^{M}(x, y)$ and
$d_{b_{0}}^{M}(o, z)=d_{b_{0}}^{M}(o, y)+d_{b_{0}}^{M}(y, z)$ such that $\alpha<a_{0}$ and $\alpha<b_{0}$.
From Theorem \ref{theorem1 of betweenness relations in KM-fuzzy metric spaces}, we get 
$d_{\alpha}^{M}(o, y)=d_{\alpha}^{M}(o, x)+d_{\alpha}^{M}(x, y)$ and
$d_{\alpha}^{M}(o, z)=d_{\alpha}^{M}(o, y)+d_{\alpha}^{M}(y, z)$.
Since $B_{d_{a}^{M}}=\{
(x, y, z)\in X^{3} \mid 
d_{a}^{M}(x, z)=d_{a}^{M}(x, y)+d_{a}^{M}(y, z)\}$ is a betweenness relation and it satisfies the transitivity (P2) and (P3),
it follows that
$d_{\alpha}^{M}(o, z)=d_{\alpha}^{M}(o, x)+d_{\alpha}^{M}(x, z)$ and
$d_{\alpha}^{M}(x, z)=d_{\alpha}^{M}(x, y)+d_{\alpha}^{M}(y, z)$.
Hence $\alpha\leq \mathfrak{B}^{\mathcal{D}^{M}}(o, x, z)$
and $\alpha\leq \mathfrak{B}^{\mathcal{D}^{M}}(x, y, z)$.
From the arbitrariness of $\alpha$, $\mathfrak{B}^{\mathcal{D}^{M}}(o, x, y)\wedge \mathfrak{B}^{\mathcal{D}^{M}}(o, y, z)\leq \mathfrak{B}^{\mathcal{D}^{M}}(o, x, z)$ and $\mathfrak{B}^{\mathcal{D}^{M}}(o, x, y)\wedge \mathfrak{B}^{\mathcal{D}^{M}}(o, y, z)\leq \mathfrak{B}^{\mathcal{D}^{M}}(x, y, z)$.
Therefore (FBR4) and (FBR5) both hold.
\end{proof}

Next, we will present the equivalent characterization of 
$\mathfrak{B}^{\mathcal{D}^{M}}$.

\begin{theorem}
\label{characterization1 of a fuzzy betweenness relation induced by a KM-fuzzy metric}
Let $(X, M, \wedge)$ be a KM-fuzzy metric space. 
Then 
for any $a\in (0,1)$, 
\begin{eqnarray*}
a\leq\mathfrak{B}^{\mathcal{D}^{M}}(x, y, z)
&\Leftrightarrow &
\forall b<a, ~d_{b}^{M}(x, z)=d_{b}^{M}(x, y)+d_{b}^{M}(y, z)\\
&\Leftrightarrow &
\forall b<a, ~d_{b}^{M}(x, z)\geq d_{b}^{M}(x, y)+d_{b}^{M}(y, z).
\end{eqnarray*}
\end{theorem}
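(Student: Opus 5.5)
The plan is to let $A(x,y,z)=\{b\in(0,1)\mid d_{b}^{M}(x,z)\geq d_{b}^{M}(x,y)+d_{b}^{M}(y,z)\}$. By the definition in Theorem \ref{a fuzzy betweenness relation2 induced by a KM-fuzzy metric}, $\mathfrak{B}^{\mathcal{D}^{M}}(x,y,z)=\bigvee A(x,y,z)$. The second equivalence in the statement is immediate, since the triangle inequality for the metric $d_{b}^{M}$ (Theorem \ref{theorem of a nest of metric induced by a KM-fuzzy metric}(3)) makes ``$=$'' and ``$\geq$'' the same condition, exactly as in the definition of $B_{d_{a}^{M}}$. So only the first equivalence needs an argument.

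The key structural fact is that $A(x,y,z)$ is a down-set in $(0,1)$. This is Theorem \ref{theorem1 of betweenness relations in KM-fuzzy metric spaces}(3): if $b'\leq b$ and $b\in A(x,y,z)$, then $b'\in A(x,y,z)$.

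For ``$\Leftarrow$'', assume every $b<a$ lies in $A(x,y,z)$. Then $(0,a)\subseteq A(x,y,z)$, so $\mathfrak{B}^{\mathcal{D}^{M}}(x,y,z)\geq\bigvee(0,a)=a$, using $a>0$.

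For ``$\Rightarrow$'', assume $a\leq\bigvee A(x,y,z)$ and fix $b<a$. Then $b<\bigvee A(x,y,z)$, so by the definition of supremum there is some $c\in A(x,y,z)$ with $b<c$. Since $A(x,y,z)$ is a down-set, $b\in A(x,y,z)$, which is the required condition at level $b$.

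I expect no real obstacle. The only delicate point is that the condition ranges over all $b<a$ and not over $b\leq a$. The supremum need not be attained, so one cannot conclude $a\in A(x,y,z)$ directly. Working with strict inequalities avoids this, and the lower semicontinuity $d_{a}^{M}=\bigwedge_{b>a}d_{b}^{M}$ is not needed here. One might still note, via Theorem \ref{theorem1 of betweenness relations in KM-fuzzy metric spaces}(2), that the condition does pass to $b=a$, but the proof does not require it.
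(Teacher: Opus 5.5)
Your argument follows the paper's proof step for step. The $\Leftarrow$ direction, via $\bigvee_{b<a}b=a$, is fine, and so is your remark that ``$=$'' and ``$\geq$'' give the same condition. But the $\Rightarrow$ direction rests entirely on your ``key structural fact'' that $A(x,y,z)$ is a down-set, i.e.\ on Theorem~\ref{theorem1 of betweenness relations in KM-fuzzy metric spaces}(2)--(3). That fact is false, so your proof and the paper's have the same gap. The proof of Theorem~\ref{theorem1 of betweenness relations in KM-fuzzy metric spaces}(2) fixes a single level $b>a$ with $(x,y,z)\in B_{d_b^M}$ and then takes $\bigwedge_{b>a}$ of the inequality, which would require membership at \emph{every} level above $a$. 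Lower semicontinuity $d_a^M=\bigwedge_{b>a}d_b^M$ only gives that $A(x,y,z)$ is closed under decreasing limits: if $b_n\downarrow a$ with $b_n\in A(x,y,z)$, then $a\in A(x,y,z)$. It says nothing about levels below the members of $A(x,y,z)$, because different levels of a nest can carry unrelated metrics. (Your closing aside is also wrong, for a related reason: passing from all $b<a$ up to $b=a$ would need $d_a^M=\bigvee_{b<a}d_b^M$, which is exactly the extra hypothesis the paper adds in the next theorem.)

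Here is a counterexample to the statement itself. Let $X=\{x,y,z\}$, $M(u,v,0)=0$, $M(u,u,t)=1$ for $t>0$, $M(x,y,t)=M(y,z,t)=\frac{t}{t+1}$ and $M(x,z,t)=\max\{\frac{t}{t+2},\min\{\frac{t}{t+1},\frac12\}\}$, extended symmetrically. This is a KM-fuzzy metric under $\wedge$. Off the diagonal, $M$ is continuous in $t$, stays below $1$ and tends to $1$. The bound $M(u,v,t)\leq\frac{t}{t+1}$ for $u\neq v$, together with monotonicity in $t$, gives every instance of (FM4) except $M(x,y,s)\wedge M(y,z,r)\leq M(x,z,s+r)$. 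That one follows from $\frac{m}{m+1}\leq M(x,z,2m)\leq M(x,z,s+r)$ with $m=s\wedge r$. A direct computation gives $d_b^M(x,y)=d_b^M(y,z)=\frac{b}{1-b}$ for all $b$, while $d_b^M(x,z)=\frac{b}{1-b}$ for $b<\frac12$ and $d_b^M(x,z)=\frac{2b}{1-b}$ for $b\geq\frac12$. Hence $A(x,y,z)=[\frac12,1)$ and $\mathfrak{B}^{\mathcal{D}^M}(x,y,z)=1$. Yet for $a=\frac12$ and $b=\frac14$ we get $d_b^M(x,z)=\frac13<\frac23=d_b^M(x,y)+d_b^M(y,z)$. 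So the $\Rightarrow$ direction is false in general, and no argument can close the gap. It holds exactly when every $A(x,y,z)$ is a down-set, e.g.\ when the levels are proportional, as for the standard $M_d$. The characterization you were aiming for is in fact the correct one for the implication-based relation $\mathfrak{B}^{M}$. In this example $\mathfrak{B}^{M}(x,y,z)=0\neq\mathfrak{B}^{\mathcal{D}^M}(x,y,z)$, so the paper's identification of the two relations fails as well.
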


\begin{proof}
On one hand, suppose that $d_{b}^{M}(x, z)\geq d_{b}^{M}(x, y)+d_{b}^{M}(y, z)$ for any $b<a$. 
By Theorem \ref{a fuzzy betweenness relation2 induced by a KM-fuzzy metric}, we have
$\mathfrak{B}^{\mathcal{D}^{M}}(x, y, z)\geq b$. Then
$\mathfrak{B}^{\mathcal{D}^{M}}(x, y, z)\geq \bigvee_{b<a}b=a$.
On the other hand, suppose that $a\leq\mathfrak{B}^{\mathcal{D}^{M}}(x, y, z)$.
For any $b<a$, $b<\mathfrak{B}^{\mathcal{D}^{M}}(x, y, z)
=\bigvee\{a\in (0, 1) \mid
d_{a}^{M}(x, z)\geq d_{a}^{M}(x, y)+d_{a}^{M}(y, z)\} $.
Then there exist some $a_{0}\in (0, 1)$
with $d_{a_{0}}^{M}(x, z)\geq d_{a_{0}}^{M}(x, y)+d_{a_{0}}^{M}(y, z)$
such that $b<a_{0}$.
By Theorem \ref{theorem1 of betweenness relations in KM-fuzzy metric spaces}, we get
$d_{b}^{M}(x, z)\geq d_{b}^{M}(x, y)+d_{b}^{M}(y, z)$.
\end{proof}

If we want to get the conclusion that 
$a\leq\mathfrak{B}^{\mathcal{D}^{M}}(x, y, z)$ if and only if 
$d_{a}^{M}(x, z)=d_{a}^{M}(x, y)+d_{a}^{M}(y, z)$,
then we need to add an additional condition that $M(x, y, t)$ is strictly monotonically increasing with respect to $t$. That is, 

\begin{theorem}
\label{characterization2 of a fuzzy betweenness relation induced by a KM-fuzzy metric}
Let $(X, M, \wedge)$ be a KM-fuzzy metric space. 
If $M(x,y,-): [0, \infty)\rightarrow [0, 1]$ 
is strictly increasing,
then 
for any $a\in (0,1)$, 
$$a\leq\mathfrak{B}^{\mathcal{D}^{M}}(x, y, z)
\Leftrightarrow 
d_{a}^{M}(x, z)=d_{a}^{M}(x, y)+d_{a}^{M}(y, z).$$
\end{theorem}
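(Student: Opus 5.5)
The plan is to reduce the statement to Theorem \ref{characterization1 of a fuzzy betweenness relation induced by a KM-fuzzy metric}. That theorem says $a\leq\mathfrak{B}^{\mathcal{D}^{M}}(x, y, z)$ if and only if $d_{b}^{M}(x, z)\geq d_{b}^{M}(x, y)+d_{b}^{M}(y, z)$ for every $b<a$. It therefore suffices to show that, when $M(x,y,-)$ is strictly increasing, this family of inequalities over all $b<a$ is equivalent to the single inequality at level $a$. Since the reverse triangle inequality is equivalent to equality for a metric, this gives the stated form.

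\emph{Direction ($\Leftarrow$).} This holds without the extra hypothesis. If $d_{a}^{M}(x, z)=d_{a}^{M}(x, y)+d_{a}^{M}(y, z)$, then Theorem \ref{theorem1 of betweenness relations in KM-fuzzy metric spaces}(3) transfers the inequality to every $b\leq a$. In particular it holds for all $b<a$, and Theorem \ref{characterization1 of a fuzzy betweenness relation induced by a KM-fuzzy metric} gives $a\leq \mathfrak{B}^{\mathcal{D}^{M}}(x,y,z)$.

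\emph{Direction ($\Rightarrow$).} The key step is a left-continuity property of the nest in the parameter:
$$d_{a}^{M}(u,v)=\bigvee_{b<a} d_{b}^{M}(u,v)\quad\text{for all } u,v\in X.$$
The nest property only gives right-continuity, $d_a^M=\bigwedge_{b>a}d_b^M$, so this is where strict monotonicity is needed and where I expect the main obstacle.

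To prove it, put $c=\bigvee_{b<a} d_{b}^{M}(u,v)$; monotonicity in the parameter (Remark \ref{remark of nest of metrics}) gives $c\leq d_a^M(u,v)$. Suppose $c<d_a^M(u,v)$, and choose $c<t<t'<d_a^M(u,v)$.
\begin{itemize}
\item For every $b<a$ we have $d_b^M(u,v)<t$, so Theorem \ref{theorem of a nest of metric induced by a KM-fuzzy metric}(1) yields $M(u,v,t)>b$. Hence $M(u,v,t)\geq a$.
\item From $d_a^M(u,v)\geq t'$, the same theorem gives $M(u,v,t')\leq a$.
\end{itemize}
Strict increase of $M(u,v,-)$ then forces $a\leq M(u,v,t)<M(u,v,t')\leq a$, which is a contradiction. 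So $c=d_a^M(u,v)$.

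With this lemma the forward direction is a limit argument. Assume $a\leq\mathfrak{B}^{\mathcal{D}^{M}}(x,y,z)$. By Theorem \ref{characterization1 of a fuzzy betweenness relation induced by a KM-fuzzy metric},
$$d_b^M(x,z)\geq d_b^M(x,y)+d_b^M(y,z)\quad\text{for all } b\in(0,a),$$
and this range of $b$ is nonempty. Each of the three families is non-decreasing in $b$, so the supremum of the sum over $b<a$ equals the sum of the suprema. Taking $\bigvee_{b<a}$ on both sides and applying the lemma gives $d_a^M(x,z)\geq d_a^M(x,y)+d_a^M(y,z)$. This is equivalent to equality by the triangle inequality, which completes the proof.
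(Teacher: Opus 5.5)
\textbf{There is a genuine gap, and the statement as written is false.}

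Your argument follows the paper's own proof. First comes the left-continuity lemma $d_a^M=\bigvee_{b<a}d_b^M$; your contradiction proof of it is correct, and it is exactly where strict monotonicity enters. Then you take $\bigvee_{b<a}$ of the level-$b$ inequalities. The gap is the input to that last step, which you import from Theorem \ref{characterization1 of a fuzzy betweenness relation induced by a KM-fuzzy metric}: the claim that $a\leq\mathfrak{B}^{\mathcal{D}^{M}}(x,y,z)$ forces $(x,y,z)\in B_{d_b^M}$ for \emph{every} $b<a$. Since $\mathfrak{B}^{\mathcal{D}^{M}}$ is defined as a supremum, you only get some level $a_0>b$ at which betweenness holds. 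Pushing it down to $b$ needs $B_{d_{a_0}^M}\subseteq B_{d_b^M}$, i.e.\ Theorem \ref{theorem1 of betweenness relations in KM-fuzzy metric spaces}(2)--(3), and that inclusion is false. Its proof takes $\bigwedge_{b>a}$ of an inequality known at a single level $b$; all it really shows is $\bigcap_{b>a}B_{d_b^M}\subseteq B_{d_a^M}$. Your ($\Leftarrow$) direction also cites Theorem \ref{theorem1 of betweenness relations in KM-fuzzy metric spaces}(3), but that direction holds directly: $a$ itself lies in the set whose supremum defines $\mathfrak{B}^{\mathcal{D}^{M}}(x,y,z)$.

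The statement fails even under strict monotonicity, so this step cannot be repaired. Strict monotonicity can only be meant for $u\neq v$, since $M(u,u,t)=1$ for all $t>0$. Let $X=\{x,y,z\}$, let $d$ be the metric with $d(x,y)=d(y,z)=1$ and $d(x,z)=2$, let $\rho$ be the discrete metric, and set
\[
M(u,v,t)=\frac{t}{t+d(u,v)}\wedge\frac{t^{2}}{t^{2}+\rho(u,v)^{2}}\quad (t>0),\qquad M(u,v,0)=0 .
\]
Both factors are KM-fuzzy metrics as in Example \ref{example of KM-fuzzy metric}, with $G(s)=\frac{s}{1+s}$ and $G(s)=\frac{s^{2}}{1+s^{2}}$. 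A pointwise minimum of two KM-fuzzy metrics is again one for $*=\wedge$, and $M(u,v,-)$ is strictly increasing for $u\neq v$. Since $M(u,v,t)\leq a$ iff one factor is $\leq a$, you get $d_a^M(u,v)=\max\{\varphi(a)\,d(u,v),\sqrt{\varphi(a)}\,\rho(u,v)\}$ with $\varphi(a)=\frac{a}{1-a}$.

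For $a\geq\frac{1}{2}$ we have $\varphi(a)\geq 1$. Then $d_a^M(x,y)=d_a^M(y,z)=\varphi(a)$ and $d_a^M(x,z)=2\varphi(a)$, so $\mathfrak{B}^{\mathcal{D}^{M}}(x,y,z)=1$. But at $a=\frac{1}{10}$ all three distances equal $\frac{1}{3}$. So $d_a^M(x,z)\neq d_a^M(x,y)+d_a^M(y,z)$, although $a\leq\mathfrak{B}^{\mathcal{D}^{M}}(x,y,z)$. The same example refutes Theorem \ref{theorem1 of betweenness relations in KM-fuzzy metric spaces}(2) and Theorem \ref{characterization1 of a fuzzy betweenness relation induced by a KM-fuzzy metric}.

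What your lemma does prove is this: under strict monotonicity, the set of levels $c$ with $(x,y,z)\in B_{d_c^M}$ is closed under increasing limits. The correct conclusion is therefore that $a\leq\mathfrak{B}^{\mathcal{D}^{M}}(x,y,z)$ iff $(x,y,z)\in B_{d_c^M}$ for some $c\in[a,1)$.
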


\begin{proof}
(1) Firstly, we prove the conclusion
$\bigvee_{b<a}d_{b}^{M}=d_{a}^{M}$.
From Theorem \ref{theorem of a nest of metric induced by a KM-fuzzy metric}, 
$d_{a}^{M}$ is monotonically increasing with respect to $a$. Then
$\bigvee_{b<a}d_{b}^{M}\leq d_{a}^{M}$.
It suffices to prove that 
$ d_{a}^{M}(x, y)\leq \bigvee_{b<a}d_{b}^{M}(x, y)$.
Take any $r>0$ with $r<d_{a}^{M}(x, y)=\bigvee\{
t\in [0, \infty) \mid M(x, y, t)\leq a \}$, then there exists some $t_{0}$ with $M(x, y, t_{0})\leq a $ such that $r<t_{0}$.
Since $M(x,y,-)$ is strictly increasing,
it follows that 
$M(x, y, r)<M(x, y, t_{0})\leq a$.
So $M(x, y, r)< a$.
This implies that there exists some $b<a$ such that
$M(x, y, r)< b<a$.
Then $M(x, y, r)\leq b$.
By Theorem \ref{theorem of a nest of metric induced by a KM-fuzzy metric}, we get 
$r\leq d_{b}^{M}(x, y)\leq \bigvee_{b<a}d_{b}^{M}(x, y)$.
From the arbitrariness of $r$, 
$ d_{a}^{M}(x, y)\leq\bigvee_{b<a}d_{b}^{M}(x, y)$.  
Therefore $\bigvee_{b<a}d_{b}^{M}=d_{a}^{M}$.  

\noindent (2)
Secondly, we prove that $a\leq\mathfrak{B}^{\mathcal{D}^{M}}(x, y, z)$
if and only if
$d_{a}^{M}(x, z)=d_{a}^{M}(x, y)+d_{a}^{M}(y, z).$
For any $a\in (0,1)$, it easy to see that
$d_{a}^{M}(x, z)=d_{a}^{M}(x, y)+d_{a}^{M}(y, z)$ implies
$a\leq\mathfrak{B}^{\mathcal{D}^{M}}(x, y, z).$
On the other hand, suppose that  $a\leq \mathfrak{B}^{\mathcal{D}^{M}}(x, y, z)$.
Take any $b<a\leq \mathfrak{B}^{\mathcal{D}^{M}}(x, y, z)$. 
Then there exists some $a_{0}\in (0,1)$ 
with $d_{a_{0}}^{M}(x, z)=d_{a_{0}}^{M}(x, y)+d_{a_{0}}^{M}(y, z)$
such that $b<a_{0}$. 
From Theorem \ref{theorem1 of betweenness relations in KM-fuzzy metric spaces}, we get $d_{b}^{M}(x, z)=d_{b}^{M}(x, y)+d_{b}^{M}(y, z)$. Then
$\bigvee_{b<a}d_{b}^{M}(x, z)=\bigvee_{b<a}\left(
d_{b}^{M}(x, y)+d_{b}^{M}(y, z)\right)$.
By (1), it follows from
$d_{a}^{M}(x, z)=\bigvee_{b<a}d_{b}^{M}(x, z)$ that $\bigvee_{b<a}\left(d_{b}^{M}(x, y)+d_{b}^{M}(y, z)\right)
=\bigvee_{b<a}\left(d_{b}^{M}(x, y)\right)+\bigvee_{b<a}\left(d_{b}^{M}(y, z)\right)=d_{a}^{M}(x, y)+d_{a}^{M}(y, z)$.
Hence $d_{a}^{M}(x, z)=d_{a}^{M}(x, y)+d_{a}^{M}(y, z)$.
\end{proof}

Finally, we will give an another equivalent characterization of the fuzzy betweenness relation $\mathfrak{B}^{\mathcal{D}^{M}}$ from the perspective of a family of betweenness relations.

\begin{theorem}
\label{The relationship2 between fuzzy betweenness relations}
Let $(X, M, \wedge)$ be a KM-fuzzy metric space. 
Then for any $a\in (0, 1)$,

\noindent {\rm (1)}
$$a\leq\mathfrak{B}^{\mathcal{D}^{M}}(x, y, z)
\Leftrightarrow 
\forall b<a, ~(x, y, z)\in B_{d_{b}^{M}}
\Leftrightarrow 
~(x, y, z)\in \bigcap_{b<a}B_{d_{b}^{M}}.$$

\noindent {\rm (2)}
$$\mathfrak{B}^{\mathcal{D}^{M}}(x, y, z)\
=\bigvee \{a\in (0,1) \mid (x, y, z) \in \bigcap_{b<a}B_{d_{b}^{M}}\},$$
where $B_{d_{b}^{M}}=\{
(x, y, z)\in X^{3} \mid 
d_{b}^{M}(x, z)=d_{b}^{M}(x, y)+d_{b}^{M}(y, z)\}$.
\end{theorem}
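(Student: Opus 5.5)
The plan is to derive both parts directly from Theorem \ref{characterization1 of a fuzzy betweenness relation induced by a KM-fuzzy metric}, using only the definition of $B_{d_{b}^{M}}$ from Theorem \ref{theorem1 of betweenness relations in KM-fuzzy metric spaces} and elementary properties of suprema.

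For (1), by Theorem \ref{characterization1 of a fuzzy betweenness relation induced by a KM-fuzzy metric} we have
$a\leq\mathfrak{B}^{\mathcal{D}^{M}}(x, y, z)$ if and only if $d_{b}^{M}(x, z)=d_{b}^{M}(x, y)+d_{b}^{M}(y, z)$ for every $b<a$. By the definition of $B_{d_{b}^{M}}$, this condition says exactly that $(x, y, z)\in B_{d_{b}^{M}}$ for every $b<a$. That in turn is the definition of membership in $\bigcap_{b<a}B_{d_{b}^{M}}$. If one insists on $b\in(0,1)$, the index set of the intersection is $\{b\in(0,1)\mid b<a\}$, which is nonempty because $a>0$, so the intersection is not vacuous. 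This gives the chain of equivalences in (1).

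For (2), set $A=\{a\in (0,1) \mid (x, y, z) \in \bigcap_{b<a}B_{d_{b}^{M}}\}$. By (1), $A=\{a\in(0,1)\mid a\leq \mathfrak{B}^{\mathcal{D}^{M}}(x,y,z)\}$, so $\bigvee A\leq \mathfrak{B}^{\mathcal{D}^{M}}(x,y,z)$.

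For the reverse inequality, first suppose $c=\mathfrak{B}^{\mathcal{D}^{M}}(x,y,z)\in(0,1)$. Then $c\in A$, so $\bigvee A\geq c$. If $c=1$, every $a\in(0,1)$ lies in $A$, and $\bigvee A=1$. If $c=0$, then $A=\emptyset$ and $\bigvee\emptyset=0$. In all cases the two sides agree.

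The only point needing care is these boundary values of $\mathfrak{B}^{\mathcal{D}^{M}}$ (namely $0$ and $1$, which lie outside $(0,1)$) in part (2). Everything else is a direct restatement of the earlier characterization.
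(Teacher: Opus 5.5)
Your argument follows the paper's route: (1) is read off from Theorem~\ref{characterization1 of a fuzzy betweenness relation induced by a KM-fuzzy metric}, and (2) is deduced from (1). Your explicit handling of the boundary values $0$ and $1$ in (2) is more careful than the paper's one-line identification of two suprema. The problem is the step that both you and the paper take on trust. The forward direction of Theorem~\ref{characterization1 of a fuzzy betweenness relation induced by a KM-fuzzy metric} rests on the claimed monotonicity $B_{d_{b}^{M}}\subseteq B_{d_{a}^{M}}$ for $a\le b$ (Theorem~\ref{theorem1 of betweenness relations in KM-fuzzy metric spaces}(2)). That claim is false. Its proof takes $\bigwedge_{b>a}$ of the inequality $d_{b}^{M}(x,z)\ge d_{b}^{M}(x,y)+d_{b}^{M}(y,z)$, which is assumed for one fixed $b$ only. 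Membership of a triple in $B_{d_{b}^{M}}$ is not monotone in $b$, and the statement under review fails with it.

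\textbf{Counterexample.} Let $X=\{x,y,z\}$ and $*=\wedge$. Set $M(p,p,t)=1$ for $t>0$, all values equal to $0$ at $t=0$, and, symmetrically,
\[
M(x,y,t)=M(y,z,t)=\begin{cases}0,&t\le 1,\\ 1,&t>1,\end{cases}\qquad
M(x,z,t)=\begin{cases}0,&t\le 1,\\ \tfrac12,&1<t\le 2,\\ 1,&t>2.\end{cases}
\]
This is a KM-fuzzy metric. For three distinct points, a positive left-hand side of (FM4) forces $s,r>1$, hence $s+r>2$, where every value is $1$. It is $M^{\mathcal{D}}$ for the nest $d_b=\rho_1$ for $b<\tfrac12$ and $d_b=\rho_2$ for $b\ge\tfrac12$. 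Here $\rho_1\equiv1$ on distinct pairs, and $\rho_2$ agrees with $\rho_1$ except that $\rho_2(x,z)=2$.

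Then $d_{b}^{M}(x,y)=d_{b}^{M}(y,z)=1$ for all $b$, while $d_{b}^{M}(x,z)=1$ for $b<\tfrac12$ and $d_{b}^{M}(x,z)=2$ for $b\ge\tfrac12$. So $(x,y,z)\in B_{d_{b}^{M}}$ if and only if $b\ge\tfrac12$. Consequently $\mathfrak{B}^{\mathcal{D}^{M}}(x,y,z)=\bigvee[\tfrac12,1)=1$. On the other hand, $(x,y,z)\notin\bigcap_{b<a}B_{d_{b}^{M}}$ for every $a\in(0,1)$. Hence (1) fails at $a=\tfrac34$, and the right-hand side of (2) is $\bigvee\emptyset=0\neq1$. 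If you read (FM2) as forcing $M(p,q,t)<1$ for $p\neq q$, use the nest $\tfrac{1}{1-b}d_b$ instead; betweenness at each level is unchanged.

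\textbf{What survives.} Your argument proves verbatim the statement with $\mathfrak{B}^{M}$ in place of $\mathfrak{B}^{\mathcal{D}^{M}}$. Combine Theorem~\ref{a fuzzy betweenness relation1 induced by a KM-fuzzy metric}(1) with step (1) of the proof of Theorem~\ref{The relationship1 between fuzzy betweenness relations}. That step uses only Theorem~\ref{theorem of a nest of metric induced by a KM-fuzzy metric}(1), not monotonicity. Together they give $a\le\mathfrak{B}^{M}(x,y,z)\Leftrightarrow(x,y,z)\in\bigcap_{b<a}B_{d_{b}^{M}}$, and your supremum argument then yields (2) for $\mathfrak{B}^{M}$. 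In the example, $\mathfrak{B}^{M}(x,y,z)=0$: for $t\in(1,2]$ we have $M(x,z,t)=\tfrac12$, while $\bigvee_{s+r=t}M(x,y,s)\wedge M(y,z,r)=0$. So $\mathfrak{B}^{M}\neq\mathfrak{B}^{\mathcal{D}^{M}}$ there as well.
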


\begin{proof}
\noindent {\rm (1)}
For any $a\in (0,1)$, it follows from 
Theorem \ref{characterization1 of a fuzzy betweenness relation induced by a KM-fuzzy metric} that
\begin{eqnarray*}
a\leq\mathfrak{B}^{\mathcal{D}^{M}}(x, y, z)
&\Leftrightarrow &
\forall b<a, ~d_{b}^{M}(x, z)=d_{b}^{M}(x, y)+d_{b}^{M}(y, z)\\
&\Leftrightarrow &
\forall b<a, ~d_{b}^{M}(x, z)\geq d_{b}^{M}(x, y)+d_{b}^{M}(y, z)\\
&\Leftrightarrow &
\forall b<a, ~(x, y, z)\in B_{d_{b}^{M}}
\Leftrightarrow 
~(x, y, z)\in \bigcap_{b<a}B_{d_{b}^{M}}.
\end{eqnarray*}

\noindent {\rm (2)}
By Theorem \ref {a fuzzy betweenness relation2 induced by a KM-fuzzy metric} and (1), it is easily to seen that 
$$\mathfrak{B}^{\mathcal{D}^{M}}(x, y, z)
=\bigvee\{a\in (0, 1) \mid
d_{a}^{M}(x, z)\geq d_{a}^{M}(x, y)+d_{a}^{M}(y, z)\}
=\bigvee \{a\in (0,1) \mid (x, y, z) \in \bigcap_{b<a}B_{d_{b}^{M}}\}.$$
\end{proof}

\subsection{The relationship between fuzzy betweenness relations $\mathfrak{B}^{M}$ and $\mathfrak{B}^{\mathcal{D}^{M}}$}

In this part, we will discuss the connections between two types of fuzzy betweenness relations induced by a KM-fuzzy metric.
One of them is that we directly construct
a fuzzy betweenness relations by the KM-fuzzy metric $M$, that is, 
$$
\mathfrak{B}^{M}(x, y, z)
=\bigwedge_{t>0}\left(M(x, z, t)\rightarrow
\left(\bigvee_{s+r=t}M(x, y, s)\wedge M(y,z,r)\right)\right).
$$
The other one is constructed by its corresponding nest of metrics $\mathcal{D}^{M}=\{d_{a}^{M}\}_{a\in (0, 1)}$, that is, 
$$\mathfrak{B}^{\mathcal{D}^{M}}(x, y, z)
=\bigvee\left\{a\in (0, 1) \mid
d_{a}^{M}(x, z)\geq d_{a}^{M}(x, y)+d_{a}^{M}(y, z)\right\}.$$

\begin{theorem}
\label{The relationship1 between fuzzy betweenness relations}
Let $(X, M, \wedge)$ be a KM-fuzzy metric space and let
$\mathcal{D}^{M}=\{d_{a}^{M}\}_{a\in (0, 1)}$
be the corresponding nest of metrics.
Then 
$\mathfrak{B}^{M}=\mathfrak{B}^{\mathcal{D}^{M}}$.
\end{theorem}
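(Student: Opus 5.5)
The plan is to show that $\mathfrak{B}^{M}$ and $\mathfrak{B}^{\mathcal{D}^{M}}$ have the same ``upper level sets'' over $(0,1)$. Concretely, we prove that for every $a\in(0,1)$ and all $x,y,z\in X$,
$$a\leq \mathfrak{B}^{M}(x,y,z)\ \Leftrightarrow\ a\leq \mathfrak{B}^{\mathcal{D}^{M}}(x,y,z).$$
Any two $[0,1]$-valued quantities that are bounded below by exactly the same $a\in(0,1)$ coincide, since each equals the supremum of such $a$, with the convention $\bigvee\emptyset=0$. So this equivalence yields $\mathfrak{B}^{M}=\mathfrak{B}^{\mathcal{D}^{M}}$.

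The two sides are rewritten with results already proved.
\begin{itemize}
\item By Theorem \ref{a fuzzy betweenness relation1 induced by a KM-fuzzy metric}(1), the left side means $a\wedge M(x,z,t)\leq \bigvee_{s+r=t}M(x,y,s)\wedge M(y,z,r)$ for all $t>0$.
\item By Theorem \ref{characterization1 of a fuzzy betweenness relation induced by a KM-fuzzy metric}, the right side means $d_{b}^{M}(x,z)\geq d_{b}^{M}(x,y)+d_{b}^{M}(y,z)$ for all $b<a$.
\end{itemize}
The translation between $M$ and $d_{b}^{M}$ will always go through Theorem \ref{theorem of a nest of metric induced by a KM-fuzzy metric}(1), namely $M(u,v,t)>b\Leftrightarrow d_{b}^{M}(u,v)<t$.

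\emph{($\Rightarrow$)} Fix $b<a$, where we may take $b>0$. It suffices to show that every $t>d_{b}^{M}(x,z)$ satisfies $t\geq d_{b}^{M}(x,y)+d_{b}^{M}(y,z)$.
\begin{itemize}
\item For such $t$ we have $M(x,z,t)>b$, so $a\wedge M(x,z,t)>b$.
\item The hypothesis then gives $\bigvee_{s+r=t}M(x,y,s)\wedge M(y,z,r)>b$.
\item Hence some split $s+r=t$ has $M(x,y,s)>b$ and $M(y,z,r)>b$.
\item This means $d_{b}^{M}(x,y)<s$ and $d_{b}^{M}(y,z)<r$, so the sum is $<t$.
\end{itemize}
Letting $t\downarrow d_{b}^{M}(x,z)$ gives the desired inequality.

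\emph{($\Leftarrow$)} Fix $t>0$ and take any $c\in(0,1)$ with $c<a\wedge M(x,z,t)$. It is enough to show $\bigvee_{s+r=t}M(x,y,s)\wedge M(y,z,r)>c$, since then taking the supremum over $c$ gives the inequality.
\begin{itemize}
\item Since $c<a$, the hypothesis gives $d_{c}^{M}(x,y)+d_{c}^{M}(y,z)\leq d_{c}^{M}(x,z)$.
\item Since $M(x,z,t)>c$, we get $d_{c}^{M}(x,z)<t$.
\item So there is positive slack: we can choose $s>d_{c}^{M}(x,y)$ and $r>d_{c}^{M}(y,z)$ with $s+r=t$, for instance by distributing the gap $t-d_{c}^{M}(x,y)-d_{c}^{M}(y,z)$ equally.
\item Then $M(x,y,s)>c$ and $M(y,z,r)>c$, as required.
\end{itemize}

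The only delicate point is handling strict versus non-strict inequalities. This is why the right-hand characterization uses ``for all $b<a$'' rather than the level $a$ itself, and why the proof passes through auxiliary levels $b<a$, resp.\ $c<a$. Once that is done, both directions are mechanical, and no continuity beyond Theorem \ref{theorem of a nest of metric induced by a KM-fuzzy metric}(1) is needed.
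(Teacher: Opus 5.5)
Your proof is correct and follows essentially the same route as the paper. Both compare level sets using Theorem~\ref{a fuzzy betweenness relation1 induced by a KM-fuzzy metric}(1) and Theorem~\ref{characterization1 of a fuzzy betweenness relation induced by a KM-fuzzy metric}, then prove the middle equivalence through $M(u,v,t)>b\Leftrightarrow d_{b}^{M}(u,v)<t$ with auxiliary levels below $a$; your ($\Rightarrow$) is just the strict-inequality (contrapositive) form of the paper's step, and your explicit choice of the split $s+r=t$ in ($\Leftarrow$) fixes the paper's loose quantifier there (it writes ``for any $t=s+r$'' where only some split works).
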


\begin{proof} 
\noindent (1) 
We firstly prove that 
for any $a\in (0, 1)$ and for all $x, y, z\in X$,
$$\forall t>0, a\wedge M(x, z, t)\leq \bigvee_{s+r=t}M(x, y, s)\wedge M(y,z,r)\Leftrightarrow \forall b<a, d_{b}^{M}(x, z)\geq d_{b}^{M}(x, y)+d_{b}^{M}(y, z).$$

\noindent i) 
If $\forall t>0,~a\wedge M(x, z, t)\leq \bigvee_{s+r=t}M(x, y, s)\wedge M(y,z,r)$ holds. For any $b<a$.
take any $t=s+r>0$ with $d_{b}^{M}(x, y)+d_{b}^{M}(y, z)\geq t$. 
Then $d_{b}^{M}(x, y)\geq s$ and $d_{b}^{M}(y, z)\geq r$.
By Theorem \ref{theorem of a nest of metric induced by a KM-fuzzy metric}, we know  $M(x, y, s)\leq b$ and $M(y, z, r)\leq b$.
Then $ \bigvee_{s+r=t}M(x, y, s)\wedge M(y,z,r)\leq b$.
So $ a\wedge M(x, z, t)\leq b$.
It follows from $b<a$ that $ M(x, z, t)\leq b$.
From Theorem \ref{theorem of a nest of metric induced by a KM-fuzzy metric}, $d_{b}^{M}(x, z)\geq t$.
By the arbitrariness of $t$, we obtain
$d_{b}^{M}(x, z)\geq d_{b}^{M}(x, y)+d_{b}^{M}(y, z)$.

\noindent ii)
If $\forall b<a,~d_{b}^{M}(x, z)\geq d_{b}^{M}(x, y)+d_{b}^{M}(y, z)$ holds.
Take any $\beta>0$ with $\beta<a\wedge M(x, z, t)$.
Then $\beta<a$ and $\beta<M(x, z, t)$.
By Theorem \ref{theorem of a nest of metric induced by a KM-fuzzy metric}, we know $d_{\beta}^{M}(x, z)<t$. 
Since $d_{\beta}^{M}(x, z)\geq d_{\beta}^{M}(x, y)+d_{\beta}^{M}(y, z)$, 
it follows that $d_{\beta}^{M}(x, y)<s$ and $d_{\beta}^{M}(y, z)<r$ for any $t=s+r$.
From Theorem \ref{theorem of a nest of metric induced by a KM-fuzzy metric}, we get 
$\beta<M(x, y, s)$ and $\beta<M(y, z, r)$.
Then $\beta<M(x, y, s)\wedge M(y, z, r)$.
So $\beta\leq \bigvee_{s+r=t}M(x, y, s)\wedge M(y,z,r)$.
By the arbitrariness of $\beta$, 
$a\wedge M(x, z, t)\leq \bigvee_{s+r=t}M(x, y, s)\wedge M(y,z,r)$.

\noindent (2) By Theorem \ref{a fuzzy betweenness relation1 induced by a KM-fuzzy metric} and Theorem \ref{characterization1 of a fuzzy betweenness relation induced by a KM-fuzzy metric}, 
\begin{eqnarray*}
a\leq \mathfrak{B}^{M}(x, y, z)
&\Leftrightarrow &
\forall t>0, a\wedge M(x, z, t)\leq \bigvee_{s+r=t}M(x, y, s)\wedge M(y,z,r).\\
&\Leftrightarrow & \forall b<a,
d_{b}^{M}(x, z)\geq d_{b}^{M}(x, y)+d_{b}^{M}(y, z)
\Leftrightarrow  a\leq\mathfrak{B}^{\mathcal{D}^{M}}(x, y, z).
\end{eqnarray*}
Therefore $\mathfrak{B}^{M}(x, y, z)=\mathfrak{B}^{\mathcal{D}^{M}}(x, y, z)$, i.e., $\mathfrak{B}^{M}=\mathfrak{B}^{\mathcal{D}^{M}}$.
\end{proof}

Finally, we shall discuss the two type of fuzzy betweenness relations whether satisfy the other fuzzy transitivity properties.
By Theorem  \ref{The relationship1 between fuzzy betweenness relations}
and Theorem \ref{The relationship2 between fuzzy betweenness relations}, we get the following conclusions.

\begin{theorem}
\label{The relationship3 between fuzzy betweenness relations}
Let $(X, M, \wedge)$ be a KM-fuzzy metric space and let
$\mathcal{D}^{M}=\{d_{a}^{M}\}_{a\in (0, 1)}$
be the corresponding nest of metrics by $M$.
Then 
$\mathfrak{B}^{M}$ and $\mathfrak{B}^{\mathcal{D}^{M}}$ both satisfy {\rm (FP1)-(FP8)} and {\rm (FT1)-(FT6)}.

\noindent 
{\rm (FP1)} $T(x, s, t)*T(s, t, y)\leq T(x, s, y)$ 
for all $x, y, s, t\in X$;

\noindent 
{\rm (FP2)} $T(x, s, t)*T(s, y, t)\leq T(x, s, y)$ 
for all $x, y, s, t\in X$;

\noindent 
{\rm (FP3)} $T(x, s, t)*T(s, y, t)\leq T(x, y, t)$ 
for all $x, y, s, t\in X$;

\noindent 
{\rm (FP4)} $T(s, x, t)*T(s, y, t)\leq T(s, x, y)\vee T(s, y, x)$ 
for all $x, y, s, t\in X$;

\noindent 
{\rm (FP5)} $T(s, x, t)*T(s, y, t)\leq T(s, x, y)\vee T(y, x, t)$ 
for all $x, y, s, t\in X$;

\noindent 
{\rm (FP6)} $T(x, s, t)*T(y, s, t)\leq T(x, y, t)\vee T(y, x, t)$ 
for all $x, y, s, t\in X$;

\noindent 
{\rm (FP7)} $T (x, s, t)*T(y, s, t)\leq T(x, y, s)\vee T(y, x, s)$ 
for all $x, y, s, t\in X$;

\noindent 
{\rm (FP8)} $T (x, s, t)*T(y, s, t)\leq T(x, y, s)\vee T(y, x, t)$ 
for all $x, y, s, t\in X$.

\noindent 
{\rm (FT1)} $T(x, y, t)*T(s, t, z)\leq T(x, y, z)$ 
for all $x, y, s, t\in X$;

\noindent 
{\rm (FT2)} $T(x, y, t)*T(t, s, z)\leq T(x, y, z)$ 
for all $x, y, s, t\in X$;

\noindent 
{\rm (FT3)} $T(x, y, t)*T(t, z, s)\leq T(x, y, z)$ 
for all $x, y, s, t\in X$;

\noindent 
{\rm (FT4)} $T(s, x, t)*T(t, y, z)\leq T(x, y, z)$ 
for all $x, y, s, t\in X$;

\noindent 
{\rm (FT5)} $T(x, s, t)*T(t, y, z)\leq T(x, y, z)$ 
for all $x, y, s, t\in X$;

\noindent 
{\rm (FT6)} $T(x, s, t)*T(s, y, z)\leq T(x, y, z)$ 
for all $x, y, s, t\in X$.
\end{theorem}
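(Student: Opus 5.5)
The plan is to reduce every fuzzy transitivity property to its crisp counterpart, applied level by level to the nest $\mathcal{B}^{M}=\{B_{d_{b}^{M}}\}$. By Theorem \ref{The relationship1 between fuzzy betweenness relations} we have $\mathfrak{B}^{M}=\mathfrak{B}^{\mathcal{D}^{M}}$, so it suffices to treat $T=\mathfrak{B}^{\mathcal{D}^{M}}$ with $*=\wedge$. The key tool is Theorem \ref{The relationship2 between fuzzy betweenness relations}(1): $a\leq T(x,y,z)$ if and only if $(x,y,z)\in B_{d_{b}^{M}}$ for every $b<a$. Since the inequalities only concern values in $[0,1]$, it is enough to show that every $a\in(0,1)$ below the left-hand side is also below the right-hand side. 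The case $a=0$ is trivial.

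For the non-disjunctive properties {\rm (FP1)}--{\rm (FP3)} and {\rm (FT1)}--{\rm (FT6)} I would argue as follows. Take $a\leq T(\cdot)\wedge T(\cdot)$. Then for every $b<a$ both hypothesis triples lie in $B_{d_{b}^{M}}$. The crisp property (P$i$) or (T$i$) for the metric betweenness $B_{d_{b}^{M}}$ then puts the conclusion triple in $B_{d_{b}^{M}}$ for every $b<a$, and Theorem \ref{The relationship2 between fuzzy betweenness relations}(1) returns $a\leq T(\text{conclusion})$.

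For the disjunctive properties {\rm (FP4)}--{\rm (FP8)} one extra step is needed, because the disjunct that holds may depend on $b$. For each $b<a$ one of the two conclusion triples lies in $B_{d_{b}^{M}}$. One of the two alternatives therefore occurs for $b$ arbitrarily close to $a$, i.e.\ cofinally below $a$. Theorem \ref{theorem1 of betweenness relations in KM-fuzzy metric spaces}(2) gives $B_{d_{c}^{M}}\subseteq B_{d_{b}^{M}}$ for $b\leq c$, so a cofinal set of levels propagates downward to all $b<a$. Hence one fixed disjunct holds at all levels below $a$, which gives $a\leq T(s,x,y)\vee T(s,y,x)$ (respectively, the other disjunctions).

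The whole argument therefore rests on the crisp claim in Example \ref{example of betweenness relation of metric}: that a metric betweenness $B_{d}$ satisfies (P1)--(P8) and (T1)--(T6). I expect this to be the main obstacle. (P2) and (P3) come from Theorem \ref{theorem1 of betweenness relations in KM-fuzzy metric spaces}(1) and symmetry, and (P1) is a routine triangle-inequality squeeze. I would verify each remaining property directly by chaining the defining equalities with the triangle inequality. I am, however, not confident all of them hold in an arbitrary metric. For instance, in the taxicab plane take $s=(0,0)$, $t=(1,1)$, $x=(1,0)$, $y=(0,1)$. Then both $(s,x,t)$ and $(s,y,t)$ lie in $B_{d}$, yet neither $(s,x,y)$ nor $(s,y,x)$ does, so (P4) fails. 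Similarly, (T1) looks unconstrained in $s$. So before writing the proof I would first test each crisp property on such examples. The level-cut reduction above transfers exactly those crisp properties that genuinely hold, so the fuzzy statement should be asserted only for those.
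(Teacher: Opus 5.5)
Your doubt is well founded and decisive: the statement is false as written. The paper's proof is exactly your level-cut reduction. It is written out for (FP1), with the other cases declared analogous, and it rests entirely on the claim in the metric-betweenness example that $B_d$ satisfies (P1)--(P8) and (T1)--(T6). That claim is asserted without proof and does not hold. Your own crisp assessment also needs correcting.

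(P1) is not a routine squeeze. With $t=s$, both hypotheses $(x,s,s)$ and $(s,s,y)$ hold by reflexivity and symmetry, so (P1) would put every triple into $B_d$. Even for four distinct points it fails on your own square, with $x=(1,0)$, $s=(0,0)$, $t=(0,1)$, $y=(1,1)$.

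The same degeneracy kills every (T$i$). Take $t=s=y$ in (T1), (T2), (T5), (T6); $t=y$ and $s=z$ in (T3); $s=x$ and $t=y$ in (T4). Both hypotheses then become instances of reflexivity and symmetry, forcing $B_d=X^3$. Likewise (P6)--(P8) fail already in $\mathbb{R}$ with $s=t=1$, $x=0$, $y=2$.

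Also, (P2) is not supplied by the theorem that $B_{d_a^M}$ is a betweenness relation. That theorem gives (B3), i.e.\ (P3) after symmetry. (P2) needs the chain $d(x,t)=d(x,s)+d(s,y)+d(y,t)\geq d(x,y)+d(y,t)\geq d(x,t)$.

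The step genuinely missing from your plan is the reverse transfer. A crisp failure only shows that the reduction cannot prove a property, not that the fuzzy property fails. It is easy to supply:

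For the standard KM-fuzzy metric $M_d(x,y,t)=t/(t+d(x,y))$ one computes $d_a^{M_d}=\frac{a}{1-a}\,d$. Hence $B_{d_a^{M_d}}=B_d$ for every $a\in(0,1)$, and $\mathfrak{B}^{\mathcal{D}^{M_d}}$ is the characteristic function of $B_d$. With $*=\wedge$, each (FP$i$) or (FT$i$) for it is literally (P$i$) or (T$i$) for $B_d$.

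For $\mathfrak{B}^{M_d}$ you can check this directly on $X=\{p,q\}$. It equals $1$ on all triples $(u,v,v)$ and $(v,v,u)$, while $\mathfrak{B}^{M_d}(p,q,p)=\bigwedge_{t>0}\frac{t}{t+2d(p,q)}=0$. The substitutions above turn both hypotheses into such triples and the conclusion into $(p,q,p)$: in (FP1) use $x=y=p$, $s=t=q$; in each (FT$i$) use $x=z=p$, $y=q$. This refutes (FP1) and (FT1)--(FT6).

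Together with your taxicab example for (FP4) and (FP5), the only properties that survive are (FP2) and (FP3). These are (FBR5) and (FBR4) rewritten via symmetry, and your level-cut argument does prove them. Your cofinality argument for disjunctive conclusions is correct and addresses a point the paper passes over, but in the end it is not needed.
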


\begin{proof} 
Firstly, we show $\bigcap_{b<a}B_{d_{b}^{M}}$ is 
a metric-betweenness relation.
From Theorem
\ref{theorem of a nest of metric induced by a KM-fuzzy metric}, we know $d_{a}^{M}$ is a metric for any $a\in (0,1)$.
Since Example \ref{example of betweenness relation of metric}, it follows that
$B_{d_{a}^{M}}=\{(x, y, z)\in X^{3} \mid 
d_{a}^{M}(x, z)=d_{a}^{M}(x, y)+d_{a}^{M}(y, z)\}$ is a metric-betweenness relation and it satisfies the transitivity properties of (P1)-(P8) and (T1)-(T6).
it is not difficult to check that $\bigcap_{b<a}B_{d_{b}^{M}}$ is also a metric-betweenness relation and also satisfies  (P1)-(P8) and (T1)-(T6).

Next, we prove 
$\mathfrak{B}^{\mathcal{D}^{M}}$ satisfies (FP1).
For any $x, y, s, t\in X$,
take any $a\in (0, 1)$ with $a\leq \mathfrak{B}^{\mathcal{D}^{M}}(x, s, t)
\wedge \mathfrak{B}^{\mathcal{D}^{M}}(s, t, y)$.
Then $a\leq \mathfrak{B}^{\mathcal{D}^{M}}(x, s, t)$ and $a\leq\mathfrak{B}^{\mathcal{D}^{M}}(s, t, y)$.
By Theorem  \ref{The relationship2 between fuzzy betweenness relations}, 
$(x, s, t)\in \bigcap_{b<a}B_{d_{b}^{M}}$ and 
$(s, t, y)\in \bigcap_{b<a}B_{d_{b}^{M}}$.
Since $\bigcap_{b<a}B_{d_{b}^{M}}$ is a metric-betweenness relation and satisfies the transitivity property (P1), we know 
$(s, t, y)\in \bigcap_{b<a}B_{d_{b}^{M}}$.
Then $a\leq \mathfrak{B}^{\mathcal{D}^{M}}(s, t, y)$.
From the arbitrariness of $a$, we obtain
$\mathfrak{B}^{\mathcal{D}^{M}}(x, s, t)
\wedge \mathfrak{B}^{\mathcal{D}^{M}}(s, t, y)\leq \mathfrak{B}^{\mathcal{D}^{M}}(x, s, y)$.
So (FP1) holds. 
The same process can be applied to the other cases and omitted here. 
Hence $\mathfrak{B}^{\mathcal{D}^{M}}$ satisfies (FP1)-(FP8) and (FT1)-(FT6). 
By Theorem \ref{The relationship1 between fuzzy betweenness relations}, we get 
$\mathfrak{B}^{M}=\mathfrak{B}^{\mathcal{D}^{M}}$.
Therefore $\mathfrak{B}^{M}$ and $\mathfrak{B}^{\mathcal{D}^{M}}$ both satisfy (FP1)-(FP8) and (FT1)-(FT6). 
\end{proof}

At the end of this paper, we can obtain the following summary diagram (see Figure \ref{summary of fuzzy betweenness relations induced by a KM-fuzzy metric}).

\begin{figure}[htbp]
\centering  
\includegraphics[height=7cm, width=15.5cm]
{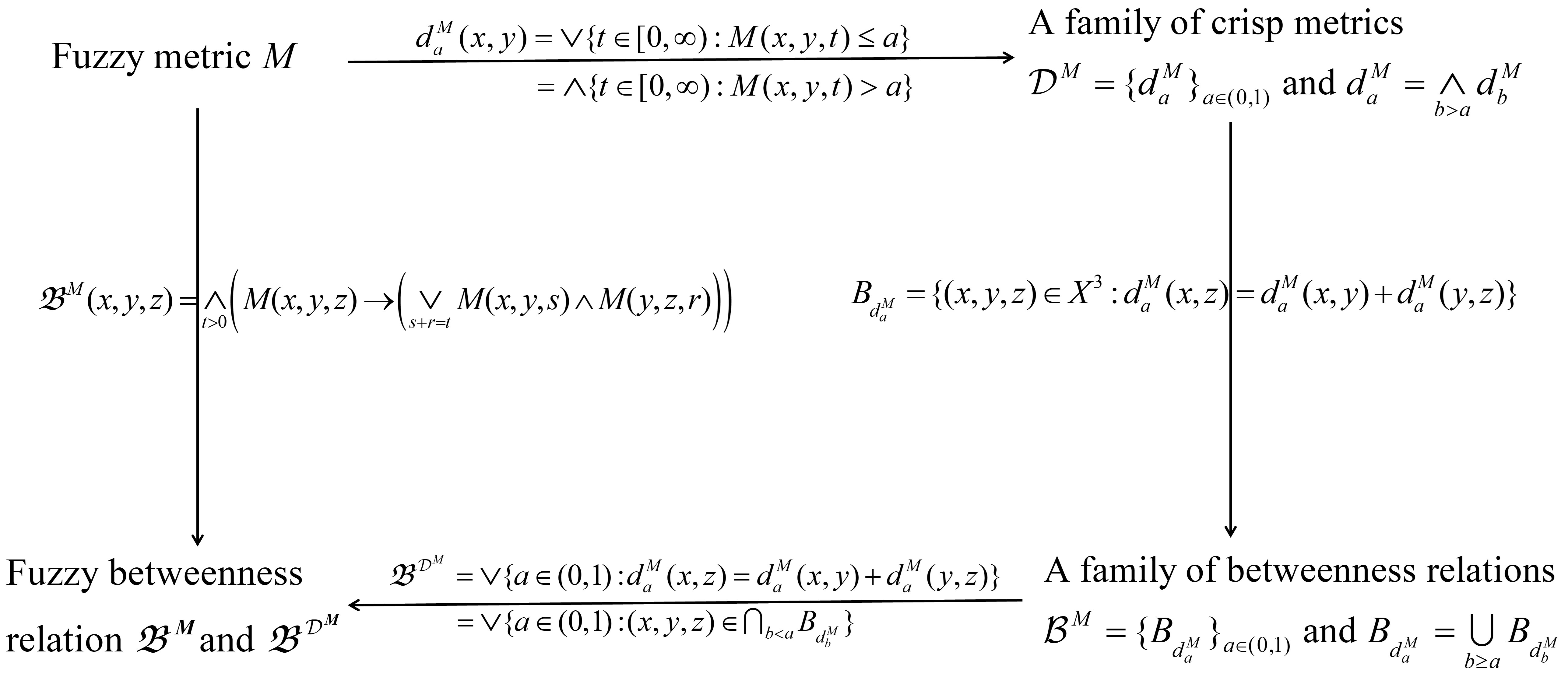}
\caption{Summary diagram of fuzzy betweenness relations induced by a KM-fuzzy metric.} 
\label{summary of fuzzy betweenness relations induced by a KM-fuzzy metric}
\end{figure}

\section{\bf Conclusions}

\noindent
{\bf Conflicts of interest:} 
The authors declare no conflict of interest.
\vskip2mm


\end{document}